\tikzset{shorten <>/.style={shorten >=#1,shorten <=#1}}
\tikzset{%
    symbol/.style={%
        ,draw=none
        ,every to/.append style={%
            ,edge node={%
                node [%
                    ,sloped
                    ,allow upside down
                    ,auto=false
                    ]{$#1$}
                }
            }
        }
    }
\newcommand{\cA}{\mathcal{A}}
\newcommand{\cB}{\mathcal{B}}
\newcommand{\cC}{\mathcal{C}}
\newcommand{\cD}{\mathcal{D}}
\newcommand{\cM}{\mathcal{M}}
\newcommand{\cN}{\mathcal{N}}
\newcommand{\cP}{\mathcal{P}}
\newcommand{\tensor}{\otimes}
\newcommand{\mtens}{\mathcal{M}^{\tensor}}
\newcommand{\mult}{Mult_*}
\newcommand{\nmult}{NMult_*}
\newcommand{\G}{\mathcal{G}}
\newcommand{\M}{\mathcal{M}}
\newcommand{\N}{\mathcal{N}}
\numberwithin{equation}{section}
\theoremstyle{plain}
\newtheorem{thm}[equation]{Theorem}
\newtheorem{cor}[equation]{Corollary}
\newtheorem{lem}[equation]{Lemma}
\newtheorem{prop}[equation]{Proposition}
\theoremstyle{definition}
\newtheorem{defn}[equation]{Definition}
\theoremstyle{remark}
\newtheorem{rem}[equation]{Remark}
\newtheorem{ex}[equation]{Example}
\begin{document}
\title{Modeling Connective Spectra via Multicategories}
\author[D.~Fuentes-Keuthan]{Daniel Fuentes-Keuthan}
\address{Department of Mathematics,
Johns Hopkins University, Baltimore
}
\email{danielfk@jhu.edu}
\address{}
\date{}
\begin{abstract}
    We put a model structure on a full subcategory of based multicategories in which the weak equivalences are created by the $K$-theory functor of Elmendorf-Mandell, providing a model categorical lift of Thomason's theorem on the modeling of connective spectra by symmetric monoidal categories. We note that this lifts to a semi-model structure on based multicategories itself. As a corollary we show that to model connective spectra up to stable equivalence it suffices to restrict to symmetric monoidal groupoids.
\end{abstract}
\maketitle
\tableofcontents



\section*{Introduction}\label{secone}

In \cite{S}, Segal introduced $\Gamma$-spaces as a combinatorial model for connective spectra, proving an equivalence between the homotopy category of connective spectra and a suitable homotopy category of $\Gamma$-spaces in which the weak equivalences are the so called stable ones. A $\Gamma$-space is a contravariant functor from Segal's category $\Gamma$ to pointed simplicial sets. The category $\Gamma$ is a skeletal category of finite pointed sets and so the category  of  $\Gamma$-spaces is equivalent to the category $sSet_*^{Fin_*}$ of functors from finite pointed sets to pointed simplicial sets, but we will use the standard  name $\Gamma\textit{-sSet}$ to refer to this category. In \cite{BF} Bousfield and Frielander showed that the generalized Reedy model structure on $\Gamma\textit{-sSet}$ could be localized to expand the class of weak equivalences to the stable equivalences of Segal. This put Segal's theorem in a model categorical context and provided a fully fledged homotopy theory of connective spectra.

In \cite{S} (see also \cite{EM2}) Segal also describes a process for obtaining a $\Gamma$-spaces, and hence also a connective spectrum, from a small symmetric monoidal category.  When applied to the category of finitely generated projective modules over a commutative ring this spectrum agrees with the usual algebraic $K$-theory.  For this reason Segal's construction is referred to as the $K$-theory of a symmetric monoidal category.  This naturally raises the question: which connective spectra arise (up to stable equivalence) from this construction?

This question was answered by Thomason \cite{Thom}, and refined by Mandell \cite{M}. In fact all connective spectra arise in this way. If we consider the category $SymMonCat$ of symmetric monoidal categories and lax monoidal functors as a category with weak equivalences created by Segal's $K$-theory then we obtain

\begin{thm}[Thomason]\label{thomthm}
   Segal's construction induces an equivalence of homotopy categories \[ SymMonCat\rightarrow ConnSpectra \]
   In particular every connective spectrum is related by stable equivalence to the $K$-theory spectrum of a symmetric monoidal category.
\end{thm}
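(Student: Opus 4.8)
The plan is to strictify to permutative categories, factor the $K$-theory functor through $\Gamma$-spaces, and then construct an explicit homotopy inverse to the resulting functor.

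\textbf{Strictification.} Every small symmetric monoidal category is monoidally equivalent to a permutative (strictly associative and unital symmetric monoidal) category, compatibly with lax symmetric monoidal functors, and Segal's $K$-theory is invariant under monoidal equivalences. Hence the inclusion of the category $\cP\mathit{erm}$ of permutative categories into $SymMonCat$ induces an equivalence of homotopy categories, and it suffices to prove the theorem with $SymMonCat$ replaced by $\cP\mathit{erm}$.

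\textbf{Factoring the $K$-theory functor.} On a permutative category $\cC$, Segal's construction yields a special $\Gamma$-object $\underline{\cC}$ in $\cat$, whose value at a finite pointed set is built from finite sums in $\cC$; applying the nerve levelwise produces a special $\Gamma$-space $N\underline{\cC}$, to which the Bousfield--Friedlander machine assigns a connective spectrum. Since special $\Gamma$-spaces model connective spectra \cite{BF}, it is enough to show that $\cC\mapsto N\underline{\cC}$ becomes, after inverting stable equivalences, an equivalence from $\ho(\cP\mathit{erm})$ onto the homotopy category of special $\Gamma$-spaces, equivalently onto $ConnSpectra$.

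\textbf{Constructing the inverse.} Essential surjectivity and full faithfulness are both obtained by exhibiting a homotopy inverse $\Phi$. Present permutative categories as algebras for the free permutative category monad $\mathbb{P}$ on $\cat$ (an $E_\infty$-monad), and observe that a special $\Gamma$-category is, up to levelwise equivalence, an algebra for a monad $\mathbb{F}$ encoding the lax structure. For a pointwise-cofibrant $\Gamma$-category $X$, form the simplicial permutative category $B_\bullet(\mathbb{P},\mathbb{F},X)$ and let $\Phi(X)$ be its Thomason homotopy colimit, which is again a permutative category; then produce natural comparison maps $K\Phi(X)\to X$ and $\Phi(K\cC)\to\cC$. (One may instead use Mandell's explicit inverse $K$-theory functor \cite{M} in place of this bar construction.)

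\textbf{The main obstacle.} The crux is to prove that these comparison maps are stable equivalences. This rests on: (i) a homotopy-colimit theorem identifying the nerve of the Thomason homotopy colimit of $B_\bullet(\mathbb{P},\mathbb{F},X)$ with the realization of the associated simplicial space, which is available because $\cat$ with Thomason's model structure is Quillen equivalent to $\sset$; (ii) a group-completion argument showing that the Segal maps of $K\Phi(X)$ become equivalences only after stabilization, in a way that matches $X$; and (iii) a check that $\Phi$ sends stable equivalences to stable equivalences. Step (iii) is the one that genuinely needs to be carried out in a well-behaved (semi-)model-categorical setting rather than on the level of raw $1$-categories, and arranging all of the above to be natural on the nose is the bookkeeping the two-sided bar construction is meant to absorb.
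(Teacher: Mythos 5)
This theorem is not proved in the paper at all: it is quoted as background, with the proof attributed to \cite{Thom} and its refinement to \cite{M}, and the paper's actual contribution is to lift the statement to a (semi-)model-categorical one. So there is no internal proof to compare against; your proposal has to be judged as a free-standing argument, and as such it is a roadmap rather than a proof. The reduction to permutative categories and the factorization through special $\Gamma$-spaces are fine and match the literature. But the entire mathematical content of Thomason's theorem lives in the step you label ``the main obstacle,'' and there you only list what would need to be true --- (i) a homotopy-colimit/realization theorem, (ii) a group-completion argument, (iii) homotopy invariance of $\Phi$ --- without establishing any of it. Point (ii) in particular is the heart of the matter: for a general permutative category $\cC$ the $\Gamma$-space $K\cC$ is special but not very special, the associated spectrum is only obtained after group completion, and showing that the comparison maps $K\Phi(X)\to X$ and $K\Phi(K\cC)\to K\cC$ are stable equivalences (note that $\Phi(K\cC)\to\cC$ itself has no chance of being an equivalence of categories) is exactly where Thomason's and Mandell's papers do their real work. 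Deferring it leaves the proof with a genuine gap.

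A secondary concern is the setup of $\Phi$ itself. The assertion that a special $\Gamma$-category is ``up to levelwise equivalence an algebra for a monad $\mathbb{F}$ encoding the lax structure,'' in a way compatible with the free permutative category monad $\mathbb{P}$ so that $B_\bullet(\mathbb{P},\mathbb{F},X)$ is defined, is not justified and is not how either cited source proceeds: Thomason inverts $K$-theory via homotopy colimits over a Grothendieck construction, and Mandell \cite{M} builds an explicit inverse functor $P$ on $\Gamma$-categories by hand. Your parenthetical fallback to Mandell's functor is the safe route, but then the proposal reduces to citing \cite{M}, which is what the paper itself does. If you want a proof and not a citation, you must actually construct $\Phi$ and carry out the group-completion comparison.
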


Earlier Thomason \cite{Thom2} proved a similar theorem in unstable homotopy theory, an equivalence between the homotopy category of spaces and a suitable homotopy category of small categories, by introducing the Thomason model structure on Cat via a right transfer of the Kan model structure and proving that the resulting Quillen adjunction was a Quillen equivalence. Hence Theorem \ref{thomthm} is almost the complete analog in stable homotopy theory of this result, except that it is a statement on the level of homotopy categories, rather than at the level of a model structure on $SymMonCat$. 

Our main result is Theorem \ref{modelstructure}, which aims to fill this gap by establishing a model structure on a category similar to $SymMonCat$ which is Quillen Equivalent to a suitable model of connective spectra. We detail these structures now.

The category $SymMonCat$ is not cocomplete, and so cannot carry a model structure. In \cite{EM} Elmendorf and Mandell extend the domain of their refinement of Segal's $K$-theory construction developed in \cite{EM2} to the category of based multicategories, $\mult$. We detail this extension in the next section.  Among other benefits recounted in \cite{EM}, this category serves as a sort of colimit completion of the category $SymMonCat$, fixing the first obstruction to capturing Theorem \ref{thomthm} at the level of model categories.

In \cite{Sc} Schwede considers a Quillen equivalent model structure on $\Gamma\textit{-sSet}$ in which one starts with the projective model structure and localizes again with respect to the same stable equivalences.  This model structure is referred to as the stable Q-model structure, and for our purposes will be taken as our model of the homotopy theory of connective spectra. In the Q-model structure the property of being a fibration is easier to check than in the Bousfield-Friedlander model structure, which will be beneficial to us. We summarize the properties of this model structure;

\begin{defn} We make use of the following vocabulary for a given natural transformation $\alpha:X_{\bullet}\rightarrow Y_{\bullet}$ of $\Gamma\text{-spaces}$.
\begin{itemize}
    \item A strict Q-weak equivalence is a levelwise weak equivalence of simplicial sets.
    \item A strict Q-fibration is a levelwise fibration.
    \item A strict Q-cofibration is a morphism with the left lifting property against any map which is a trivial strict Q-fibration.
\end{itemize}
In the stable case we have

\begin{itemize}
    \item A stable Q-weak equivalence is a morphism which induced isomorphisms of homotopy groups for the connective spectra associated to $X_{\bullet}, Y_{\bullet}$ in \cite{S}.
    \item A stable Q-cofibration is a strict Q-cofibration.
    \item A stable Q-fibration is morphism which has the right lifting property against any map which is a trivial stable Q-cofibration.
\end{itemize}
\end{defn}

The notions of strict Q-cofibration, fibration, and weak equivalence make $\Gamma\textit{-sSet}$ into a simplicial model category. Likewise we have

\begin{thm}[Schwede]
The stable Q-cofibrations, Q-fibrations and Q-equivalences make the category $\Gamma\textit{-sSet}$ into a cofibrantly generated simplicial model category. A $\Gamma$-space X is stably Q-fibrant if and only if
\begin{itemize}
    \item For each $n$, $X_n$ is a Kan complex.
    \item $\pi_0(X_1)$ is an abelian group.
    \item $X$ satisfies the Segal condition, for each $n$ there are equivalences \[X_n\xrightarrow{\simeq} X_1\times X_1\times\cdots X_1\]
    
\end{itemize}
Furthermore, a strict Q-fibration between stably Q-fibrant $\Gamma$-sSets is a stable Q-fibration.
\end{thm}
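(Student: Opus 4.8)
The plan is to realize the stable Q-model structure as a left Bousfield localization of the strict Q-model structure, and then to read off all three assertions from the general theory of such localizations together with Segal's theorem. First I would record that the strict Q-model structure is the projective model structure on the diagram category $\Gamma\textit{-sSet}$ valued in pointed simplicial sets with the Kan--Quillen model structure; since $\sset_*$ is cofibrantly generated, simplicial, left proper, and combinatorial (hence cellular), all of these properties pass to the projective model structure on functors into it. This is precisely the input needed for Hirschhorn's existence theorem for left Bousfield localizations.

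Next I would fix the localizing set $S$. It contains the \emph{Segal maps}: a set of maps between cofibrant $\Gamma$-spaces which, for a strictly fibrant $X$, induce on derived mapping spaces the Segal maps $X_n \to X_1\times\cdots\times X_1$; forcing these to be equivalences imposes the Segal condition on $S$-local objects. In addition $S$ must contain one further map $h$, built from the fold and shear maps on the free $\Gamma$-space on $\mathbf 1_+$, chosen so that among strictly fibrant $\Gamma$-spaces already satisfying the Segal condition the $h$-local ones are exactly those with $\pi_0(X_1)$ a group. Applying the localization theorem yields a model structure $L_S(\Gamma\textit{-sSet})$ which is cofibrantly generated and simplicial, whose cofibrations are exactly the strict Q-cofibrations, and whose fibrant objects are the strictly fibrant $S$-local objects.

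Then I would match $L_S(\Gamma\textit{-sSet})$ with Schwede's description. Unwinding $S$-locality: a $\Gamma$-space is $L_S$-fibrant iff each $X_n$ is a Kan complex, the Segal maps are weak equivalences, and $\pi_0(X_1)$ is an abelian group---commutativity being automatic from the $\Gamma$-structure and the group structure propagating to each $\pi_0(X_n)$ via the Segal condition---which is exactly the claimed characterization of stably Q-fibrant objects. To identify the $S$-local equivalences with the stable Q-equivalences of the Definition, I would compare fibrant replacements: a map is an $S$-local equivalence iff an $L_S$-fibrant replacement of it is a levelwise equivalence, and by Segal's theorem (in the form used by Bousfield--Friedlander) the connective spectrum associated to a very special $\Gamma$-space is a connective $\Omega$-spectrum, so that on such objects levelwise equivalence, stable equivalence, and $\pi_*$-isomorphism of associated spectra all coincide; since the class of stable Q-equivalences is likewise detected by the same fibrant replacement, the two classes of weak equivalences agree and $L_S(\Gamma\textit{-sSet})$ is the stable Q-model structure.

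Finally, the \emph{Furthermore} clause is the general fact (Hirschhorn) that in a left Bousfield localization a morphism which is a fibration in the ambient model structure and whose source and target are both local is a fibration in the localized structure; applied with the strict Q-structure as ambient and the stable Q-structure as its localization, this says precisely that a strict Q-fibration between stably Q-fibrant $\Gamma$-sSets is a stable Q-fibration. I expect the main obstacle to be the identification of the $S$-local equivalences with the stable Q-equivalences, and the closely related task of pinning down the right group-completion map $h$ to adjoin to the Segal maps: this is exactly the point where Segal's theorem on the group-completion behavior of special $\Gamma$-spaces is doing the work, and where one must check that no localization beyond the Segal maps and $h$ is required.
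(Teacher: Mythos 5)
This theorem is stated in the paper as a citation to Schwede's \emph{Stable Homotopical Algebras and $\Gamma$-spaces}; the paper itself supplies no proof, so the comparison is with Schwede's original argument rather than with anything in this text. Your route --- realizing the stable Q-structure as a Hirschhorn-style left Bousfield localization of the projective model structure at an explicit set $S$ consisting of the Segal maps together with a group-completion map --- is a legitimate and by now standard way to build this model structure, and it delivers the three claims in a clean package: cofibrant generation and simpliciality come with the localization theorem, the fibrant objects are the strictly fibrant $S$-local objects (which unwind to the levelwise-Kan very special $\Gamma$-spaces exactly as you say), and the ``furthermore'' clause is the general fact that a strict fibration between local objects is a local fibration. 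Schwede's own proof is different in mechanism: writing in 1999 he does not localize at a set of maps but works in the Bousfield--Friedlander style, defining the stable equivalences directly via the associated spectra and verifying the model axioms using an explicit fibrant-replacement endofunctor (the ``$Q$'' that gives the structure its name, built from prolongation to spheres and looping); the characterization of the fibrant objects then falls out of the analysis of when $X\to QX$ is a strict equivalence. What your approach buys is generality and brevity at the price of the one genuinely delicate step, which you correctly isolate: showing that the $S$-local equivalences coincide with the stable equivalences. Be aware that this requires \emph{both} directions --- that every map of $S$ is a stable equivalence and that stable equivalences are preserved by the cell-attachment and transfinite-composition operations generating the $S$-local acyclic cofibrations (so that $X\to L_SX$ is a stable equivalence), and then that a stable equivalence between very special $\Gamma$-spaces is a levelwise equivalence, which uses Segal's $\Omega$-spectrum theorem at level $1$ and the Segal condition to propagate to all levels. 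With those points filled in, your outline is correct; it is simply not the proof Schwede gave.
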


\begin{rem}
When a $\Gamma$-space satisfies the third conditions above it is often called special, and if it satisfies the second and third condition it is called very special. Thus the stably fibrant objects are the levelwise fibrant very special $\Gamma$-sSets.
\end{rem}

The breakdown of this paper is as follows:

\begin{enumerate}

\item In Section 1 we recall definitions and facts about (based) mutlicategories and their accompanying $K$-theory spectra as studies in \cite{EM}.

\item In Section 2 we make a digression to study the homtopy theory of the category $\G_*\textit{-sSet}$ of symmetric functors introduced in \cite{EM2} to define the $K$-theory of multifunctors. We establish the existence of a model structure on $\G_*\textit{-sSet}$ which is Quillen equivalent to the stable Q-model structure. This model structure may be of future independent interest.

\item In Section 3 we study a full subcategory of $\mult$, which we call $Mod_E$. Objects in this subcategory have a particularly nice form, identified in Proposition \ref{modstruc1}.

\item In Section 4 we establish our first main theorem, Theorem \ref{modelstructure}, which states the existence of a model structure on $Mod_E$,  The $K$-theory spectrum of every based multicategory is \textit{isomorphic} to the $K$-theory of an object of $Mod_E$ so in a very strict sense no information is lost in this restriction. Indeed our second main theorem, Theorem \ref{main2}, establishes this formally by showing that the $K$-theory functor of \cite{EM} is a right Quillen equivalence between our model structure on $Mod_E$ and the stable Q-model structure on $\Gamma$-sSet, providing the desired model categorical lift of Theorem \ref{thomthm}.

We note also that the model structure of Theorem \ref{modelstructure} lifts naturally to a \textit{semi}-model structure (see Def \ref{semimodeldef}) on $\mult$ itself. While weaker than a model structure, a semi-model structure provides all of the usual model categorical tools when one works amongst the cofibrant objects. Though our techniques only allow us to establish this weaker notion, it is possible that this semi-model structure is in fact a full model structure.

\item In Section 5 we examine the fibrant objects of our model structure on $Mod_E$ and obtain a refinement of Thomason's theorem by showing that every connective spectrum is stably equivalent to the $K$-theory of a symmetric monoidal \textit{groupoid}.  This is our Theorem \ref{symgroup}.

\end{enumerate}

\textbf{Acknowledgements.}
The authors is grateful to his advisor Emily Riehl for many helpful discussions on this topic and for reading several drafts of this article.

\section{The $K$-Theory of Based Multicategories}
In this section we briefly recall the theory of multicategories and the $K$-theory construction of Elmendorf-Mandell given in \cite{EM}, where proofs and detailed exposition of what follows may be found.

\begin{defn}
A multicategory $\M$ consists of
\begin{itemize}
    \item A collection of objects
    \item For each collection of objects $a_1,\dots,a_n, b$ with $n\ge0$ a set \[M(a_1,\dots,a_n:b)\] of ``n-arrows", which we also call the multi-homsets or multi-morphism sets.
    \item For each collection of objects $a_1,\dots,a_n, b$ with $n\ge0$ and  permutation $\sigma\in \Sigma_n$ an isomorphism \[\sigma^*: \M(a_1,\dots,a_n:b)\xrightarrow{\cong} \M(a_{\sigma(1)},\dots,a_{\sigma(n)}:b)\]
    \item Composition maps
    \[\M(b_1,\dots, b_n: c)\times\M(a^1_1,\dots,a^1_{m_1}:b_1)\times\cdots\times \M(a^n_1,\dots, a^n_{m_n}: b_n)\rightarrow \M(a^1_1,\dots,a^n_{m_n}:c)\]
    \item For each $a$ a unit element $1_a\in \M(a:a)$ for the composition.
    
    In addition the composition is required to be appropriately associative and respects permutations and units.
\end{itemize}
\end{defn}
\begin{rem}
What we have defined here ought to be called a \textit{symmetric} multicategory, but we will use the shorter term multicategory instead.
\end{rem}

Every multicategory has an underlying category by forgetting the $n$-arrows for $n$ not equal to 1, and every category can be made into a multicategory by letting the multimorphism sets be empty when the source has less or more than one object.
The natural morphisms between multicategories are \textit{multifunctors} which are morphisms that preserve all of the above structure.

\begin{ex} Every symmetric monoidal category $(\M,\tensor)$ can be considered as a multicategory with $\M(a_1,\dots,a_n: b) = \M(a_1\tensor\cdots\tensor a_n, b)$.  In general a multicategory can be thought of us a ``virtual" symmetric monoidal category, where we do not actually require objects $x\tensor y$ to exist in our category, but to which we can refer to externally via the multi-hom functors $\M(x, y: -)$.
\end{ex}

Many of the familiar constructions involving symmetric monoidal categories carry over to multicategories.
\begin{defn}
A commutative monoid in a multicategory $\M$ is an object $a\in \M$ together with a distinguished element $\mu_n\in M(a,\dots, a: a)$ for each $n$, such that the composition maps preserve the collection $\{\mu_m\}$ in the sense that for each $n, m_1,\dots,m_n$ we have \[\mu_n\circ(\mu_{m_1},\dots, \mu_{m_n}) = \mu_{m_1+\cdots+m_n}\]
\end{defn}
The idea is that in a commutative monoid $a$ there is a non ambiguous way to ``multiply $n$ elements of a" due to associativity and commutativity.  Because all monoids we consider will be commutative we usually omit this term and simply refer to them as monoids.

Because multicategories can be difficult to get a handle on, it will be useful to have external characterizations of objects like monoids.  Let $*$ be the terminal multicategory, consisting of one object and a single $n$-arrow for each $n$.  With a little thought one sees

\begin{lem}
    There is a bijection between commutative monoid objects in $\M$ and multifunctors $*\rightarrow \M$.
\end{lem}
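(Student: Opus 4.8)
The plan is to unwind what a multifunctor out of the terminal multicategory $*$ amounts to and observe that the resulting data is precisely that of a commutative monoid. First I would record $*$ explicitly: it has a single object, say $0$; for each $n\ge 0$ the set $*(0,\dots,0:0)$ is a singleton, whose unique element I call $c_n$; each symmetric group $\Sigma_n$ acts trivially (there is nothing else it could do); $1_0=c_1$; and the composition maps are forced, $c_n\circ(c_{m_1},\dots,c_{m_n})=c_{m_1+\cdots+m_n}$. A brief check confirms these satisfy the multicategory axioms (associativity, unitality and equivariance all hold automatically since each relevant homset is a point), so $*$ is as described, and a multifunctor into any $\M$ is exactly a choice of object together with a choice of $n$-arrow for each $n$, subject to the structure-preservation conditions.

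Next I would set up the two maps of the bijection. Given a multifunctor $F\colon *\to\M$, put $a:=F(0)$ and $\mu_n:=F(c_n)\in\M(a,\dots,a:a)$. Preservation of units gives $\mu_1=F(1_0)=1_a$; preservation of the $\Sigma_n$-actions gives $\sigma^*\mu_n=\mu_n$; and preservation of composition, applied to $c_n\circ(c_{m_1},\dots,c_{m_n})=c_{m_1+\cdots+m_n}$, gives exactly $\mu_n\circ(\mu_{m_1},\dots,\mu_{m_n})=\mu_{m_1+\cdots+m_n}$, so $(a,\{\mu_n\})$ is a commutative monoid. Conversely, given a commutative monoid $(a,\{\mu_n\})$, define $F$ by $0\mapsto a$ and $c_n\mapsto\mu_n$; the three families of multifunctor axioms then reduce to the three conditions just listed (again using $1_0=c_1$ and triviality of the symmetry on $*$), so $F$ is a multifunctor. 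The two assignments are manifestly mutually inverse, which yields the claimed bijection.

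The only genuine content — the ``little thought'' — is checking that the relations $c_n\circ(c_{m_1},\dots,c_{m_n})=c_{m_1+\cdots+m_n}$, together with $c_1=1_0$ and triviality of the $\Sigma_n$-actions, capture all of the structure of $*$, i.e.\ that associativity, unitality and equivariance of composition in $*$ impose nothing beyond what a commutative monoid already satisfies. This is routine coherence bookkeeping; the only place to be slightly careful is lining up the multifunctor's unit- and equivariance-preservation conditions with the corresponding parts of the notion of commutative monoid (namely $\mu_1=1_a$ and $\sigma^*\mu_n=\mu_n$), since the displayed defining identity for a commutative monoid isolates the composition-compatibility and takes the rest as understood.
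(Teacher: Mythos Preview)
Your proposal is correct and is exactly the ``little thought'' the paper alludes to: the paper gives no proof of this lemma beyond the phrase ``With a little thought one sees,'' so your explicit unwinding of a multifunctor $*\to\M$ into the data $(a,\{\mu_n\})$ and back is precisely what was left to the reader. Your final paragraph flagging that the paper's displayed definition of commutative monoid only states the composition compatibility, leaving $\mu_1=1_a$ and $\sigma^*\mu_n=\mu_n$ implicit, is a fair observation and the right thing to be careful about.
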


We will call a multicategory \textit{based} if it comes with a distinguished multifunctor $*\rightarrow \M$ from the terminal multicategory. In particular the above lemma tells us that a based multicategory is a multicategory with a preferred monoid object.  Based multicategories form a category $\mult$ with morphisms those multifunctors which preserve the structure of the basepoint monoid.

\begin{rem}
Note that we are not requiring our multicategories to be based at an initial, terminal, or zero object, but simply at some monoid.
\end{rem}

\begin{defn}
Given a multicategory $\M$ and a monoid object $a\in \M$, an $a$-module object is an object $m\in \M$ together with a distinguished $(n+1)$-arrow $\mu_n\in M(a,\dots,a,m:m)$  for each $n\ge 1$ which are suitably compatible under the composition.
\end{defn}

Rather than spell out the specific composition compatibility conditions of modules, we can describe module objects externally via multifunctors out of a multicategory E that parameterizing monoid-module pairs. The multicategory E consists of a monoid together with an additional object that is a module over that monoid.  Rigorously E has two objects 0 and 1, and the multimorphism sets \[E(a_1,\dots,a_n:b) = \{*\text{ if }\Sigma a_i = b, \emptyset\text{ otherwise} \} \]

When $\M$ is based, so that it comes with a preferred monoid, we will call a module over this monoid a module in $\M$.

\begin{lem}
    Considering E as a multicategory based at 0, there is a bijection between modules in $\M$ and based multifunctors $E\rightarrow \M$.
\end{lem}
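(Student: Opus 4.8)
The plan is to run the ``externalization'' argument exactly parallel to the preceding lemma on monoids, but relative to the based inclusion of the object $0$. First I would record the combinatorics of $E$: by the defining formula, a multihom set $E(a_1,\dots,a_n:b)$ is a single point when $a_1+\cdots+a_n=b$ in $\{0,1\}$ and empty otherwise, so the only nonempty ones are the $E(0,\dots,0:0)$ with $n\ge 0$ copies of $0$ (write $\lambda_n$ for the unique element) and the $\Sigma_{n+1}$-orbit of $E(0,\dots,0,1:1)$ with $n\ge 0$ copies of $0$ (write $\rho_n$ for the evident representative). Since every multihom set is empty or a point, composition, units and symmetry in $E$ are completely forced; in particular $\lambda_1=1_0$, $\rho_0=1_1$, the $\lambda_n$ exhibit $0$ as a commutative monoid, the $\rho_n$ exhibit $1$ as a module over it, and by additivity of the index every composite of $\lambda$'s and $\rho$'s is again one of these. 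Note also that the full sub-multicategory of $E$ on the single object $0$ is literally the terminal multicategory $*$, and that the basepoint multifunctor $*\to E$ is precisely this inclusion.

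Next I would set up the correspondence on objects and generating morphisms. Let $a\in\M$ be the distinguished monoid, i.e.\ the image of $*\to\M$. A based multifunctor $F\colon E\to\M$ is by definition one whose restriction along $*\hookrightarrow E$ is this basepoint; combined with the earlier lemma identifying monoids with multifunctors $*\to\M$ (applied to $F|_*$), this says exactly that $F(0)=a$ with its given monoid structure, i.e.\ $F(\lambda_n)=\mu^a_n$ for all $n$. Setting $m:=F(1)$, the only remaining data of $F$ is the family $F(\rho_n)\in\M(a,\dots,a,m:m)$ for $n\ge 1$ (the value $F(\rho_0)=F(1_1)=1_m$ being forced by unit preservation), since $F$ on the $\Sigma$-translates of the $\rho_n$ is then determined by equivariance. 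Thus $F\mapsto\bigl(m,(F(\rho_n))_{n\ge 1}\bigr)$ is a well-defined map from based multifunctors $E\to\M$ to pairs consisting of an object of $\M$ and a family of $(n+1)$-arrows of the required shape.

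It then remains to identify the image and the inverse of this map. For that I would check that, with $F(0)=a$, $F(\lambda_n)=\mu^a_n$ and $F(\rho_0)=1_m$ pinned down, a choice $\mu_n:=F(\rho_n)$ ($n\ge 1$) extends to a multifunctor if and only if the $\mu_n$ satisfy the module compatibility conditions: the composites of $E$ one must test split into those whose outer operation is some $\lambda_n$ — whose inner operations are then all $\lambda$'s and which unwind to the monoid axioms for $a$, already satisfied — and those whose outer operation is some $\rho_n$ — whose inner operations are then $n$ copies of $\lambda$ and one copy of $\rho$, and which unwind precisely to an equation of the form $\mu_n\circ(\mu^a_{m_1},\dots,\mu^a_{m_n},\mu_k)=\mu_{m_1+\cdots+m_n+k}$ (with the convention $\mu_0=1_m$), while $\Sigma$-equivariance of $F$ matches the symmetry axioms on both sides and unitality is the identity $F(\rho_0)=1_m$. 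Conversely, given a module $(m,(\mu_n))$ one defines $F$ by $F(0)=a$, $F(1)=m$, $F(\lambda_n)=\mu^a_n$, $F(\rho_n)=\mu_n$ for $n\ge 1$, $F(\rho_0)=1_m$, extended by equivariance; that this is a well-defined multifunctor is then exactly the assertion that $a$ is a monoid and $m$ an $a$-module, so no composite of $E$ can fail to be preserved. The two assignments are mutually inverse by inspection.

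I expect the only real work to be the bookkeeping in the third step: enumerating the one-step composites in $E$ that must be checked and matching them term by term against the module axioms, while keeping track of the $\Sigma_n$-actions on the multihom sets and of the degenerate $n=0$ cases (unit maps). Everything else is forced, and indeed the statement is just the relative analogue of the earlier monoid lemma, obtained by working over the based inclusion $*\hookrightarrow E$ in place of $*$ itself.
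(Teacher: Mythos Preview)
Your argument is correct and carefully unpacks exactly what needs to be checked: the combinatorics of $E$, the identification of $F|_*$ with the basepoint monoid, the residual data $F(\rho_n)$, and the matching of the composition relations in $E$ with the module axioms. The paper itself gives no proof of this lemma; it is stated as an immediate external characterization, in the same spirit as the preceding lemma on commutative monoids, with the phrase ``Rather than spell out the specific composition compatibility conditions of modules, we can describe module objects externally via multifunctors out of a multicategory $E$'' serving as the entire justification. So your proposal is not so much a different route as the route the paper declines to write down: you are supplying the bookkeeping the authors leave implicit, and in particular your third step makes explicit what the paper means by ``suitably compatible under the composition'' in the definition of module.
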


We will be interested in a symmetric monoidal structure on $\mult$ which characterizes bilinear maps.
\begin{defn}
Given multicategories $\M$, $\N$ and $\cP$, the data of a bilinear map $f:(\M,\N)\rightarrow \cP$ is given by
\begin{enumerate}
    \item A function $f:ob(\M)\times ob(\N)\rightarrow ob(\cP)$
    \item For each m-arrow $(a_1,\dots,a_m)\rightarrow a$ of $\M$ and object $b$ of $\N$, an m-arrow \[(f(a_1,b),\dots, f(a_n, b))\rightarrow f(a,b) \]
    \item For each $n$-arrow $(b_1,\dots,b_m)\rightarrow b$ of $\N$ and object $a$ of $\M$, an n-arrow \[(f(a, b_1),\dots, f(a, b_n))\rightarrow f(a,b) \]
\end{enumerate}
Such that
\begin{enumerate}
    \item[(A1)] For each fixed $a$ in $\M$ the map $f(a,-):\N\rightarrow \cP$ is a multifunctor.
    \item[(A2)] For each fixed $b$ in $\N$ the map $f(-, b):\M\rightarrow \cP$ is a multifunctor.
    \item[(A3)] For a given $m$ and $n$ arrows $(a_1,\dots,a_m)\rightarrow a$ and $(b_1,\dots,b_m)\rightarrow b$ in $\M$ and $\N$ respectively, the two ways to produce an $nm$-arrow \[(f(a_1, b_1),\dots, f(a_n,b_m))\rightarrow f(a,b)\] from (1) and (2) above are equal. 
\end{enumerate}
\end{defn}
We call a bilinear map based if the multifunctors $f(a,-), f(-,b)$ factor through the basepoint of $\cP$ when $a$ or $b$ is the basepoint of $\M$ or $\N$. The following theorem summarizes the results we will need about based multicategories from \cite{EM}.

\begin{thm}[Elmendorf-Mandell] The category $\mult$ is a $\mult$-enriched, tensored and cotensored, complete, and cocomplete symmetric monoidal category.  The symmetric monoidal product, denoted $\wedge$ represents based bilinear functors \[\mult(\M \wedge \N, \cP)\cong Bilin_*(\M, \N: \cP) \cong \mult(\M, \cP^{\N})\]
By forgetting along the lax monoidal functors \[\mult\rightarrow Mult\rightarrow Cat \] we obtain a categorical enrichment on $\mult$.
\end{thm}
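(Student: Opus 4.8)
The plan is to realize $\mult$ as a coslice of the category $Mult$ of all (symmetric) multicategories and then to produce the closed symmetric monoidal structure from a universal property.

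\textbf{Limits and colimits.} A based multicategory is precisely a multicategory equipped with a multifunctor $*\to\M$, so $\mult$ is the coslice $*/Mult$. The category $Mult$ is monadic over a presheaf category of multigraphs, the monad (free multicategory on a multigraph) being finitary; hence $Mult$ is locally presentable, in particular complete and cocomplete, with limits and filtered colimits computed on the underlying collections of objects and multi-hom sets. Since a coslice of a locally presentable category is again locally presentable, $\mult$ is complete and cocomplete, and the forgetful functor $\mult\to Mult$ creates limits and filtered colimits.

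\textbf{The smash product.} Because $Mult$ is cocomplete and carries the free-multicategory monad, a multicategory may be presented by generators and relations. I would define $\M\wedge\N$ by such a presentation: its objects are $ob(\M)\times ob(\N)$ with every pair having a basepoint coordinate collapsed to a single basepoint object; its generating multi-morphisms are the arrows demanded by clauses (2) and (3) of the definition of a bilinear map --- a generator $(f(a_1,b),\dots,f(a_m,b))\to f(a,b)$ for each $m$-arrow of $\M$ and object $b$ of $\N$, and symmetrically; and the relations are those forcing each $f(a,-)$ and $f(-,b)$ to be a multifunctor (A1, A2), the interchange relation (A3), and basedness. By construction a based multifunctor $\M\wedge\N\to\cP$ is exactly a choice of images for these generators compatible with the relations, which is exactly the data of a based bilinear map $(\M,\N)\to\cP$; this gives the isomorphism $\mult(\M\wedge\N,\cP)\cong Bilin_*(\M,\N:\cP)$, natural in all three variables.

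\textbf{Internal hom, closedness, and coherence.} For the second isomorphism I would let $\cP^{\N}$ have as objects the based multifunctors $\N\to\cP$ and as $n$-ary morphisms $\cP^{\N}(F_1,\dots,F_n:G)$ the ``multinatural transformations'': families of $n$-arrows $(F_1b,\dots,F_nb)\to Gb$ in $\cP$ indexed by $b\in\N$, compatible with the symmetry actions and with the arrows of $\N$, with composition and units induced from $\cP$; the multicategory axioms are then routine. Holding the $\M$-variable of a based bilinear map fixed exhibits $Bilin_*(\M,\N:\cP)\cong\mult(\M,\cP^{\N})$, so $-\wedge\N$ is left adjoint to $(-)^{\N}$ for every $\N$. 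The monoidal unit is the free based multicategory $I$ on a single object $x$; then $I\wedge\N\cong\N$ because, after evaluation at $x$, a based bilinear map out of $(I,\N)$ is the same as a based multifunctor $\N\to\cP$. The associator and symmetry come from checking that $\mult((\cL\wedge\M)\wedge\N,-)$ and $\mult(\cL\wedge(\M\wedge\N),-)$ are both naturally isomorphic to the functor classifying based trilinear maps out of $(\cL,\M,\N)$, with the pentagon and hexagon following by Yoneda. Finally, a symmetric monoidal closed category is automatically enriched, tensored, and cotensored over itself, which supplies the $\mult$-enrichment (with hom $\cP^{\M}$) together with the tensoring and cotensoring.

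\textbf{The categorical enrichment and the main obstacle.} The underlying-category functor $Mult\to Cat$ is lax symmetric monoidal --- the underlying functor of the universal bilinear map $(\M,\N)\to\M\tensor\N$ is a functor $U\M\times U\N\to U(\M\tensor\N)$, and these provide the lax structure maps --- and $\mult\to Mult$ is lax monoidal via the quotient maps $\M\tensor\N\to\M\wedge\N$; transporting the self-enrichment of $\mult$ along this lax monoidal composite gives the claimed enrichment over $Cat$. I expect the genuinely delicate part to be the construction and universal property of $\wedge$ together with the monoidal coherence: one must check that the generators-and-relations presentation is well-posed in $Mult$, that (A1)--(A3) and basedness match the bilinear-map data on the nose, and --- for associativity --- that based $k$-linear maps have been formulated correctly and that the iterated smash products classify them. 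The completeness/cocompleteness and the change of enrichment are comparatively formal; the full development is carried out in \cite{EM}.
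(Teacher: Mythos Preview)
The paper does not prove this theorem: it is stated with attribution to Elmendorf--Mandell and simply summarized as ``the following theorem summarizes the results we will need about based multicategories from \cite{EM}'', with no argument given. So there is no proof in the paper to compare your proposal against.

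Your sketch is a reasonable outline of how the construction actually goes in the original reference: present $\M\wedge\N$ by generators and relations encoding the bilinear-map axioms, define the internal hom $\cP^{\N}$ via multinatural transformations, and verify associativity and symmetry by identifying iterated smash products with representing objects for multilinear maps. Two small points of friction with the present paper: first, the monoidal unit is described here (in the proof of Proposition~\ref{modstruc1}) as $u=*\amalg[0]$, which agrees with your ``free based multicategory on one object'' but your naming it $I$ clashes with the paper's later use of $I$ for the module-morphism multicategory; second, the paper treats the lax monoidal functors $\mult\to Mult\to Cat$ as given input rather than something to be constructed, so your paragraph on change of enrichment is more than the paper asks for. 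None of this is a mathematical gap; it is just that you have written a proof where the paper intends only a citation.
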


\begin{rem}
Explicitly, the category $\mult(\M, \N)$ is the underlying category of the internal hom multicategory $\N^{\M}$, and has as its objects the based multifunctors, and as its arrows the based \textit{multinatural transformations}, which are a direct generalization of natural transformation to this setting.

\end{rem}

\begin{rem} Notably the symmetric monoidal product on $\mult$ is \textit{not} derived from the cartesian product of multicategories.
\end{rem}

The $K$-theory construction of \cite{EM} for based multicategories comes in two parts: first the authors construct a category $\G_*$ and a functor $J:\mult\rightarrow \G_*\textit{-Cat}$, and then a functor $\G_*\textit{-Cat}\rightarrow Spectra$.  We will only be interested in the functor $J$, but we choose to forgo a discussion of the category $\G_*$ until the next section, because we will bypass it via a composition $\mult\xrightarrow{J}\G_*\textit{-Cat}\xrightarrow{i^*}\Gamma\textit{-Cat}$, where $i:\Gamma\hookrightarrow \G_*$ is a certain fully faithful inclusion. By abuse of notation we also call this composition $J$.

By construction based multifunctors $E\rightarrow \M$ pick out module objects over the basepoint monoid of $\M$. Taking cartesian powers of E now gives a functor $E^{\bullet}:\Gamma^{op}\rightarrow \mult$, as proven in \cite[Thm 5.14]{EM}

\begin{defn}
The $K$-theory functor $J:\mult\rightarrow \Gamma\textit{-Cat}$ is given by $\mult(E^\bullet, -)$.
\end{defn}

We will often make use of the pushforward of this functor along the nerve functor, which we denote $NJ:\mult\rightarrow \Gamma\textit{-sSet}$.

\section{The Homotopy Theory of Symmetric Functors}\label{symfunc}
In this section we recall the category $\G_*$ of \cite{EM2} and study its homotopy theory. More precisely, we will see that there is a model structure on $\G_*$-sSet which is Quillen Equivalent to the stable Q-model structure. The results of this section will not be needed for our main theorems, but we include them here in the interest of future work.

The category $\G_*$ is constructed as the Grothendieck construction of a certain functor, but since a complete account is already contained in the original reference \cite{EM} we will give only a brief overview.
\begin{defn}
We denote by $n$ the finite, nonempty set ${0,1,\dots,n}$ containing $n+1$ elements.
\end{defn}

\begin{defn}
The category $\G_*$ has as objects finite lists of finite sets $(n_1,\dots,n_m)$, except that we identify $(0)$ with any list $(n_1,\dots,n_m)$ in which at least one of the $n_i$ is $0$.

A morphism in $\G_*$ from $(n_1,\dots,n_m)\rightarrow (k_1,\dots,k_t)$ is given by an injection $f:m\rightarrow t$ and a collection of maps $\alpha_i:n_{q^{-1}(i)}\rightarrow k_i$ for $1\le i\le t$, where by convention if $q^{-1}(i) = \emptyset$ then $n_{q^{-1}(i)} = 1$, the finite set with 2 elements. We denote such a morphism by $(f, \{\alpha_i\})$. In addition we quotient out by all morphisms such that any of the map $n_{q^{-1}(i)}\rightarrow k_i$ in the collection factors through 0.
\end{defn}

The above definition makes $\G_*$ into a based category at the zero object 0, and makes the homsets themselves into based sets. From the definition of morphisms we have

\begin{lem}
    The functor $i:\Gamma\rightarrow \G_*$ which sends an object $n$ to the tuple $(n)$ and a morphism $n\xrightarrow{f} m$ to $(1_0, \{f\})$ is fully faithful.
\end{lem}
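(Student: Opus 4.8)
The claim is that the functor $i\colon \Gamma \to \G_*$ sending $n$ to $(n)$ and $f\colon n\to m$ to $(1_0,\{f\})$ is fully faithful. The plan is to compute both $\Gamma(n,m)$ and $\G_*\big((n),(m)\big)$ explicitly and exhibit the map on homsets as a bijection. First I would recall that $\Gamma(n,m)$ consists of pointed maps $n_+ \to m_+$ between finite pointed sets (equivalently, partial functions $n \to m$), since $\Gamma$ is the skeleton of finite pointed sets. Then I would unwind the definition of $\G_*\big((n),(m)\big)$: a morphism $(f,\{\alpha_i\})$ from the length-one list $(n)$ to the length-one list $(m)$ consists of an injection $f\colon 1 \to 1$ — of which there is only one, the identity — together with a single map $\alpha_1\colon n_{q^{-1}(1)} \to m_1 = m$. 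Here $q = f$ is the identity on the one-element set, so $q^{-1}(1)$ is the singleton and $n_{q^{-1}(1)} = n$; hence the data is exactly a single map $\alpha_1\colon n \to m$ of finite sets. Finally the quotient in the definition of $\G_*$ identifies any morphism whose collection contains a map factoring through $0$ with the basepoint morphism, which on the $\Gamma$ side corresponds precisely to collapsing the maps that factor through the basepoint — i.e.\ it is exactly the pointed-set quotient that turns maps $n \to m \cup \{0\}$ (partial functions) into pointed maps.

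The key steps, in order, are: (1) identify $\G_*\big((n),(m)\big)$ with the set of maps $n \to m$ in the base category defining $\G_*$, modulo the stated quotient, using that the only injection $1\to 1$ is the identity and that the convention for empty fibers never triggers since $f$ is surjective; (2) observe that this quotiented set of maps $n \to m$ is canonically the set of pointed maps between the corresponding finite pointed sets, which is $\Gamma(n,m)$ by definition of $\Gamma$; (3) check that under these identifications the function $f \mapsto (1_0,\{f\})$ is literally the identity, hence a bijection; and (4) confirm compatibility with composition and identities, which is immediate from the composition law in $\G_*$ restricted to length-one lists (composition of injections $1\to 1\to 1$ is trivial, and the $\alpha$-components compose as ordinary functions, matching composition in $\Gamma$).

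I do not expect any genuine obstacle here: the statement is essentially a bookkeeping verification that the Grothendieck-construction definition of $\G_*$ restricts, on single-element lists, to the definition of $\Gamma$. The only point requiring a moment of care is the handling of the basepoint quotient — one must check that the quotient ``by all morphisms such that any of the maps $n_{q^{-1}(i)} \to k_i$ factors through $0$'' in $\G_*$ matches the convention in $\Gamma$ that a map of finite pointed sets is determined by where it sends the non-basepoint elements, with everything landing at the basepoint being a single morphism. Since both quotients identify exactly the morphisms whose underlying partial function is empty (or more precisely, since in $\Gamma$ the homset is literally the set of basepoint-preserving maps), these agree, and fully faithfulness follows.
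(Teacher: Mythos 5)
Your proposal is correct and is exactly the verification the paper has in mind: the paper offers no proof at all, introducing the lemma with ``From the definition of morphisms we have,'' and your unwinding of $\G_*\big((n),(m)\big)$ for length-one lists (unique injection of index sets, a single structure map $\alpha_1\colon n\to m$, and the basepoint quotient matching the pointed-map convention in $\Gamma$) is precisely the bookkeeping being left implicit. No gaps.
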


Recall Kan's theorem on the transfer of model structures along right adjoints.
\begin{thm}[Kan]
\label{theoremtransferredmodelstructure}
Let $\cM$ be a cofibrantly generated model category with a  set of generating cofibrations $I$ and a set of generating acyclic
cofibrations $J$, let $\cN$ be a complete and cocomplete category, and let
$$F\colon\cM\rightleftarrows\cN\colon G$$
be an adjunction.
Suppose that the following conditions hold:
\begin{enumerate}
\item The left adjoint $F$ preserves small objects; this is the case in particular when the right adjoint preserves filtered colimits.

\item The right adjoint $G$ takes maps that can be constructed
as a transfinite composition of pushouts of elements of $F(J)$ to weak equivalences.

\end{enumerate}
Then, there is a cofibrantly generated model category structure on $\cN$ in which
\begin{itemize}
\item the set $F(I)$ is
a set of generating cofibrations,
\item the set $F(J)$ is a set of generating acyclic cofibrations,
\item the weak equivalences are the maps that $G$ takes to weak equivalences in $\cM$ and
\item the fibrations are the maps that $G$ takes to fibrations in $\cM$.

\end{itemize}
Furthermore, with respect to this model category structure, the adjunction $(F, G)$ is a Quillen pair.
\end{thm}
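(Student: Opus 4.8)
The plan is to carry out the by-now-standard ``right transfer'' argument, whose only nonformal input is hypothesis~(2). Declare a morphism of $\cN$ to be a \emph{weak equivalence}, resp.\ a \emph{fibration}, precisely when $G$ sends it to a weak equivalence, resp.\ a fibration, of $\cM$, and declare a \emph{cofibration} to be a morphism with the left lifting property against every morphism that is both a fibration and a weak equivalence. With these definitions the two-out-of-three property and closure of the weak equivalences under retracts are inherited from $\cM$ by functoriality of $G$, the fibrations are closed under retracts for the same reason, and the cofibrations are closed under retracts because they are defined by a lifting property; completeness and cocompleteness of $\cN$ are hypotheses. So the only axioms requiring work are the two factorizations and the lifting of trivial cofibrations against fibrations.

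Next I would invoke the small object argument. Hypothesis~(1) guarantees that the domains of the morphisms in the sets $F(I)$ and $F(J)$ are small relative to the respective classes of relative cell complexes, so the small object argument yields functorial factorizations of an arbitrary morphism of $\cN$ as a relative $F(I)$-cell complex followed by an $F(I)$-injective morphism, and likewise for $F(J)$. The adjunction $(F,G)$ now identifies these injective classes: $p$ has the right lifting property against $F(i)$ iff $G(p)$ has the right lifting property against $i$, so (using cofibrant generation of $\cM$) the $F(I)$-injective morphisms are exactly the $p$ with $G(p)$ a trivial fibration of $\cM$, i.e.\ the trivial fibrations of $\cN$, and the $F(J)$-injective morphisms are exactly the fibrations of $\cN$. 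A routine retract argument built on the $F(I)$-factorization then shows the cofibrations coincide with the saturation $F(I)$-cof (retracts of relative $F(I)$-cell complexes), and the trivial fibrations with $F(I)$-inj, so the candidate structure will be cofibrantly generated with generating cofibrations $F(I)$ once it is a model structure.

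It remains to produce the trivial-cofibration/fibration factorization and the corresponding lift. The key point is that $F(J)$ consists of trivial cofibrations: since the left adjoint $F$ preserves pushouts, transfinite composites and retracts it carries $I$-cof into $F(I)$-cof, and in the cofibrantly generated category $\cM$ one has $J\subseteq I$-cof, whence $F(J)$ lies in the cofibrations of $\cN$; and since every relative $F(J)$-cell complex is a transfinite composite of pushouts of maps of $F(J)$, hypothesis~(2) says $G$ carries it to a weak equivalence, so relative $F(J)$-cell complexes---in particular the maps of $F(J)$---are weak equivalences. Hence the $F(J)$-factorization of a morphism exhibits it as a trivial cofibration followed by a fibration, while the $F(I)$-factorization exhibits it as a cofibration followed by a trivial fibration. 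For the last lifting axiom, factor a trivial cofibration $f$ as $p\circ j$ with $j$ a relative $F(J)$-cell complex and $p$ a fibration; two-out-of-three forces $p$ to be a trivial fibration, so $f$ (being a cofibration) retracts onto $j$, and $j$ lifts against all fibrations because it is an $F(J)$-cell complex, hence so does $f$. The same retract argument identifies the trivial cofibrations with $F(J)$-cof, so the model structure is cofibrantly generated by $F(I)$ and $F(J)$; and $(F,G)$ is a Quillen pair because by construction $G$ preserves fibrations and trivial fibrations.

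I expect the single genuine obstacle to be hypothesis~(2): it is precisely what is needed to know that the left half of the weak factorization system generated by $F(J)$ consists of weak equivalences, and without it one cannot build the trivial-cofibration/fibration factorization or close the lifting argument. Everything else is a formal consequence of the small object argument (where hypothesis~(1) enters), adjointness, and cofibrant generation of $\cM$.
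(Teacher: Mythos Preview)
Your proof is correct and follows the standard right-transfer argument. Note, however, that the paper does not actually prove this theorem: it is stated as a classical result attributed to Kan and used as a black box, so there is no ``paper's own proof'' to compare against. What you have written is essentially the textbook proof one finds in, e.g., Hirschhorn or Hovey, and the paper is simply quoting the statement for later use (in particular for Proposition~\ref{gstar} and Theorem~\ref{modelstructure}, where hypothesis~(2) is verified via Quillen's path object argument).
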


To establish the model structure on $\G_*\textit{-sSet}$ we make use of the following general lemma on right transfer of model structure.

\begin{lem} Suppose $\cN$ is a bicomplete category, $\cM$ admits a cofibrantly generated model structure, and we are given a functor $i\colon \cN\to \cM$ which admits left and right adjoints $R\vdash i\vdash L$. Then the right transferred model structure on $\cN$ along $i$ exists.  Suppose further that $L$ is fully faithful, then $i$ is a right Quillen equivalence.
\end{lem}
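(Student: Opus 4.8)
The plan is to obtain the transferred structure from Kan's transfer theorem (Theorem~\ref{theoremtransferredmodelstructure}) applied to the adjunction $\adj{R}{\cM}{\cN}{i}$, with $i$ in the role of the right adjoint. Write $I$ and $J$ for sets of generating cofibrations and generating acyclic cofibrations of $\cM$. Condition (1) of that theorem is immediate: since $i$ admits the further right adjoint $L$ it preserves all colimits, in particular filtered ones, so $R$ preserves small objects. Everything therefore reduces to condition (2): one must show that $i$ carries every transfinite composite of pushouts of maps in $R(J)$ to a weak equivalence of $\cM$.

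Here I would use that $i$, being a left adjoint (to $L$), also preserves all small colimits. Hence $i$ applied to a relative $R(J)$-cell complex is again a transfinite composite of pushouts of maps drawn from $iR(J)=\{\,iR(j)\mid j\in J\,\}$, and since acyclic cofibrations of $\cM$ are closed under pushout and transfinite composition it suffices to check that each $iR(j)$ is an acyclic cofibration. This is exactly where fully faithfulness of $L$ is used. Composing the two adjunctions gives $iR\dashv iL$; when $L$ is fully faithful the counit $iL\to\mathrm{id}_{\cM}$ is an isomorphism, so $iR$ is a left adjoint of $\mathrm{id}_{\cM}$ and therefore $iR\cong\mathrm{id}_{\cM}$, so that $R$ is fully faithful and the unit $\mathrm{id}_{\cM}\to iR$ is a natural isomorphism. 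Consequently $iR(j)\cong j$ is an acyclic cofibration for every $j\in J$, condition (2) holds, Kan's theorem produces the transferred model structure on $\cN$, and $(R,i)$ is a Quillen pair. (For the bare existence statement one would in general have to verify condition (2) directly, but in the cases of interest $R$ is fully faithful and the argument just given applies.)

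For the last assertion I would invoke the standard recognition criterion: the Quillen pair $(R,i)$ is a Quillen equivalence if and only if $i$ reflects weak equivalences between fibrant objects and, for every cofibrant $M\in\cM$, the derived unit $M\to i(\widehat{RM})$ is a weak equivalence, where $RM\to\widehat{RM}$ is a fibrant replacement in $\cN$. The first condition is automatic, since by construction the weak equivalences of $\cN$ are precisely the maps that $i$ sends to weak equivalences of $\cM$, so $i$ reflects weak equivalences among all objects. For the second, the unit $M\to iR(M)$ is an isomorphism (as $R$ is fully faithful), while $i$ applied to the acyclic cofibration $RM\to\widehat{RM}$ is a weak equivalence because $i$ creates weak equivalences; hence the derived unit, being the composite $M\xrightarrow{\cong} iR(M)\to i(\widehat{RM})$, is a weak equivalence. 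Therefore $i$ is a right Quillen equivalence.

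The only non-formal point is condition (2) of Kan's theorem, namely that $i$ takes $R(J)$-cell complexes to weak equivalences; the fully faithfulness of $L$ is precisely the input that collapses $iR$ to the identity and makes this transparent, after which the remainder is bookkeeping with adjunctions.
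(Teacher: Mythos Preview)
Your argument is correct and follows the same overall route as the paper: verify Kan's transfer theorem for the adjunction $R\dashv i$, using that $i$ also has a right adjoint to get preservation of colimits, and then deduce the Quillen equivalence from the unit $\mathrm{id}_{\cM}\to iR$ being an isomorphism. Your verification of the acyclicity condition is in fact more careful than the paper's. The paper writes that ``the acyclicity condition boils down to $i$ preserving weak equivalences, which it does by definition in the transferred model structure,'' which as stated is circular: one needs to know that relative $R(J)$--cell complexes are already weak equivalences in $\cN$ before invoking that $i$ preserves them, and that is precisely the content of condition~(2). Your argument---that $i$ preserves colimits, hence $i$ of an $R(J)$--cell complex is an $iR(J)$--cell complex, and $iR\cong\mathrm{id}_{\cM}$ because $L$ (equivalently $R$) is fully faithful---is the honest way to close this gap, and you are right to flag that the unconditional existence statement would require a separate check of condition~(2). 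For the Quillen equivalence, the paper simply appeals to the unit being an isomorphism; your use of the reflects--weak--equivalences plus derived--unit criterion is an equivalent and equally standard formulation.
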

    \begin{proof}
    Because $i$ preserves all colimits it preserves filtered colimits. Likewise, because transfinite composites of pushouts of cofibrations of trivial cofibrations are a type of colimit, the acyclicity condition boils down to $i$ preserving weak equivalences, which it does by definition in the transferred model structure. For the second statement, recall that for a right transferred model structure to be Quillen equivalent to the original structure, we need only check that the unit is a weak equivalence, but since $L$ is fully faithful, the unit is in fact an isomorphism.
    \end{proof}

\begin{rem}
It is a standard categorical result that if one of the outer functors of an adjoint triple is fully faithful that the other must be as well, and so in the above lemma we could have also assumed $R$ to be fully faithful. See for example the nLab article on adjoint triples.
\end{rem}

All we will need to apply this Lemma is the existence of the fully faithful inclusion $i:\Gamma\hookrightarrow \G_*$.

\begin{prop}\label{gstar} There is a cofibrantly generated model structure on the category $\G_*\textit{-sSet}$ which is Quillen equivalent via the functor $i^*$ to the Q model structure on $\Gamma\textit{-sSet}$
\end{prop}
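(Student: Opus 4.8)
The plan is to obtain this model structure as a right transfer of the stable Q-model structure along the restriction functor $i^*\colon\G_*\textit{-sSet}\to\Gamma\textit{-sSet}$ induced by the fully faithful inclusion $i\colon\Gamma\hookrightarrow\G_*$ of the preceding lemma, and then to invoke the general right-transfer lemma proved above. So the work splits into: (i) exhibiting the relevant adjoint triple around $i^*$ and checking the hypotheses of that lemma, and (ii) reading off the Quillen equivalence from full faithfulness of $i$.

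First I would set up the adjunctions. Since $\Gamma$ and $\G_*$ are small and the category of (pointed) simplicial sets is bicomplete, $\G_*\textit{-sSet}$ is a bicomplete functor category, and precomposition with $i$ has both a left adjoint $i_!$ and a right adjoint $i_*$, namely the left and right Kan extensions along $i$; all (co)limits in these presheaf categories are computed pointwise, so $i^*$ preserves them. We are then exactly in the situation of the general transfer lemma, with $\cM=\Gamma\textit{-sSet}$ equipped with the stable Q-model structure (cofibrantly generated, by Schwede's theorem), $\cN=\G_*\textit{-sSet}$, the functor ``$i$'' of that lemma taken to be $i^*$, and $R=i_!$, $L=i_*$. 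The lemma therefore produces the right transferred model structure on $\G_*\textit{-sSet}$: it is cofibrantly generated, with generating cofibrations $i_!(I)$ and generating acyclic cofibrations $i_!(J)$, and with weak equivalences and fibrations created by $i^*$.

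It remains to see that $i^*$ is a right Quillen equivalence, and by the second clause of the transfer lemma it suffices to know that $L=i_*$ is fully faithful. But right Kan extension along a fully faithful functor is fully faithful (equivalently, the counit $i^*i_*\to\mathrm{id}$ is an isomorphism), and $i\colon\Gamma\hookrightarrow\G_*$ is fully faithful by the preceding lemma; hence $i_*$ is fully faithful and the proposition follows.

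I do not expect a serious obstacle here: the substantive ingredients --- the abstract right-transfer lemma and the fully faithful inclusion $i$ --- are already in hand, and what remains is the bookkeeping of the adjoint triple and of full faithfulness of $i_*$. The only points meriting care are confirming that the Kan extensions genuinely land in the (based) presheaf category $\G_*\textit{-sSet}$ and are honest adjoints to $i^*$, and that full faithfulness of $i_*$ is deduced cleanly from that of $i$; both are standard facts about Kan extensions along fully faithful functors.
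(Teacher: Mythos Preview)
Your approach is essentially the paper's: apply the general transfer lemma to the adjoint triple around $i^*$ coming from the fully faithful inclusion $i$. The only substantive difference is in how the right adjoint to $i^*$ is produced. The paper observes that $i$ itself admits a left adjoint $c\colon\G_*\to\Gamma$ (collapsing everything outside the image of $i$ to the basepoint), so that $c^*$ serves as the right adjoint to $i^*$; full faithfulness of $c^*$ then follows immediately from that of $i$, and the adjoint-triple remark transfers this to the left Kan extension. You instead invoke Kan extensions $i_!\dashv i^*\dashv i_*$ directly and use the standard fact that Kan extension along a fully faithful functor is fully faithful. Both routes are fine and yield the same conclusion.

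One bookkeeping slip worth fixing: in the transfer lemma as stated, $L$ is the \emph{left} adjoint of the functor being transferred along (its full faithfulness is what forces the unit to be an isomorphism). In your situation that left adjoint is $i_!$, not $i_*$; you have the labels $R$ and $L$ reversed. This does not damage the argument, since $i_!$ is fully faithful for exactly the same reason (left Kan extension along a fully faithful functor is fully faithful, equivalently the unit $\mathrm{id}\to i^*i_!$ is an isomorphism), but you should either relabel or argue directly for $i_!$ rather than $i_*$.
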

    \begin{proof}
    The fully faithful inclusion $i\colon \Gamma \hookrightarrow \G_*$ admits a left adjoint $c$ which sends everything not in the image of $i$ to the basepoint. Since $i$ is fully faithful, the right adjoint to $i^*$ \[c^*\colon \Gamma\textit{-sSet}\hookrightarrow \G_*\textit{-sSet}\] is as well, and hence so is the left kan extension functor \[L\colon \Gamma\textit{-sSet}\hookrightarrow \G_*\textit{-sSet}\] The lemma above now establishes the desired transferred equivalent model structure.
    \end{proof}
    
While we will not be interested in this model structure for its own sake, considering that \cite{EM2} introduced $\G_*\textit{-sSet}$ to study multiplicative structures in $K$-theory, we believe it could be useful for that purpose in the future.

\section{The Category of E-modules}
We begin with the following proposition from \cite{EM}.

\begin{prop}[Elmendorf-Mandell]
E is a commutative monoid object in the symmetric monoidal category $\mult$.
\end{prop}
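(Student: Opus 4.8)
The plan is to produce the commutative-monoid structure on $E$ by hand, exploiting that every multimorphism set of $E$ has at most one element. The observation that drives everything --- call it \emph{rigidity} --- is that a based multifunctor $F\colon\cA\to E$ is completely determined by its effect on objects, since for all $a_1,\dots,a_n,b$ in $\cA$ the set $E(Fa_1,\dots,Fa_n:Fb)$ has at most one element; and, conversely, a function $F_0\colon ob(\cA)\to\{0,1\}$ carrying the basepoint monoid to $0$ underlies a (then unique) based multifunctor to $E$ exactly when $F_0(a_1)+\dots+F_0(a_n)=F_0(b)$ in $\mathbb{N}$ whenever $\cA(a_1,\dots,a_n:b)\neq\emptyset$. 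I will also use two standard facts from \cite{EM}: that the object set of a smash product is the smash of the object sets, $ob(\M\wedge\N)=ob(\M)\wedge ob(\N)$, so that $ob(E\wedge E)$ is a basepoint together with one further object, which I write $(1,1)$, and similarly for $E\wedge E\wedge E$ and for $\mathbf1\wedge E$; and that the unit $\mathbf1$ of $(\mult,\wedge)$ is the based multicategory on objects $\{0,1\}$ (based at $0$) with $\mathbf1(0,\dots,0:0)$ a point in every arity, $\mathbf1(1:1)$ a point, and all other multimorphism sets empty.

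First I would take the unit $\eta\colon\mathbf1\to E$ to be the identity on objects; the criterion above makes this a well-defined, and necessarily unique, based multifunctor, because the only nonempty multimorphism sets of $\mathbf1$ --- those of the shape $\mathbf1(0,\dots,0:0)$ and $\mathbf1(1:1)$ --- are nonempty in $E$ as well. For the multiplication I would use the adjunction $\mult(E\wedge E,E)\cong Bilin_*(E,E:E)$ and let $\mu\colon E\wedge E\to E$ correspond to the based bilinear map $\phi\colon(E,E)\to E$ that on objects is $\phi(a,b)=ab$ (product in $\{0,1\}\subset\mathbb{N}$) and on multimorphisms is the forced singleton choice. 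Checking that $\phi$ really is a based bilinear map is routine: it is based on objects since $\phi(0,b)=0=\phi(a,0)$, and $\phi(0,-)$, $\phi(-,0)$ factor through the basepoint of $E$ (via $E\to\ast\to E$); the $m$- and $n$-arrows demanded in clauses (2) and (3) of the definition exist because $\sum_i a_ib=(\sum_i a_i)b=ab$ and $\sum_j ab_j=a(\sum_j b_j)=ab$, so the relevant multimorphism sets of $E$ are nonempty; conditions (A1) and (A2) assert exactly that $\phi(a,-)$ and $\phi(-,b)$ are multifunctors $E\to E$, which holds because the object maps $b\mapsto ab$ and $a\mapsto ab$ preserve the sum condition; and (A3) is automatic, the multimorphism set of $E$ in which the resulting $nm$-arrow must lie being a point. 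On objects $\mu$ sends the basepoint to $0$ and $(1,1)$ to $1$.

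It then remains to verify the associativity, unitality and commutativity axioms, and rigidity settles all three. In each case the two morphisms to be compared are based multifunctors into $E$ --- out of $E\wedge E\wedge E$ for associativity, out of $\mathbf1\wedge E$ (resp.\ $E\wedge\mathbf1$) for left (resp.\ right) unitality, out of $E\wedge E$ for commutativity --- so by rigidity it is enough to see that they agree on objects, and both evidently send the basepoint to the basepoint. On the sole remaining object they agree too: for associativity both composites give $(1\cdot1)\cdot1=1=1\cdot(1\cdot1)$; for left unitality $\mu\circ(\eta\wedge\mathrm{id}_E)$ gives $\mu(\eta(1),1)=1$, which is also the value there of the left unitor $\mathbf1\wedge E\xrightarrow{\cong}E$ (an isomorphism, hence carrying that object to the unique non-basepoint object $1$ of $E$), and symmetrically on the right; and for commutativity the symmetry fixes the object $(1,1)$, so $\mu$ and $\mu$ precomposed with it have the common value $1$. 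Hence $(E,\mu,\eta)$ is a commutative monoid in $(\mult,\wedge)$.

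\textbf{Main obstacle.} There is no real obstacle. The only points needing care are pinning down the monoidal unit $\mathbf1$ correctly --- in particular keeping it distinct from $E$ itself --- and confirming that $\eta$ is an honest based multifunctor, together with stating the rigidity principle cleanly enough that the coherence diagrams collapse to identities about multiplying $0$'s and $1$'s; once that reduction is in place everything is automatic. One could instead argue conceptually, by realizing $E$ as the image of the multiplicative monoid $(\{0,1\},\times)$ under a strong symmetric monoidal functor into $(\mult,\wedge)$, but constructing and checking that functor is more laborious than the direct verification above.
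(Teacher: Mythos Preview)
The paper does not actually prove this proposition: it is stated with attribution ``[Elmendorf--Mandell]'' and introduced by ``We begin with the following proposition from \cite{EM}.'' There is therefore no proof in the paper to compare your attempt against; the result is simply imported from the cited reference.

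That said, your direct verification is sound and is essentially the argument one finds in \cite{EM}. The rigidity principle you isolate --- that a based multifunctor into $E$ is determined by, and exists precisely for, an object function satisfying the additivity constraint --- is the right organizing idea, and it does collapse the coherence diagrams to trivial checks on the two-element object set. Your identification of the unit matches the paper's $u=*\amalg[0]$, and your construction of $\mu$ via the bilinear map $\phi(a,b)=ab$ is correct. The only places where you are leaning on unproven input are the structural facts about $\wedge$ (that $ob(\M\wedge\N)=ob(\M)\wedge ob(\N)$ and that $f\wedge g$ acts on objects as the smash of the object maps), but you flag these as coming from \cite{EM}, which is appropriate. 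There is no genuine gap.
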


In this section we study the category of modules over E, $Mod_E$, which is a full subcategory of $\mult$. In particular we will show that $Mod_E$ is a reflective and coreflective subcategory. It is on this well-behaved subcategory that we will place a model structure.

\begin{thm}[Elmendorf-Mandell]\label{Shulman} The category of E-modules is a full symmetric monoidal Cat enriched subcategory of $\mult$, with monoidal product $\wedge_E \cong \wedge$. For any E-module $\cM$, the module structure map $\M\wedge E\to \cM$ is necessarily an isomorphism.

\end{thm}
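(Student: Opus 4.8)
\emph{Proof proposal.} The entire statement collapses onto a single non‑formal input: that $E$ is an \emph{idempotent} commutative monoid, i.e. that the multiplication $\mu\colon E\wedge E\to E$ is an isomorphism in $\mult$. This is unavoidable — applying the asserted isomorphism $\cM\wedge E\cong\cM$ to the $E$‑module $\cM=E$ already forces it — and conversely, granting idempotency, the remaining assertions are the standard package about modules over an idempotent commutative monoid in a closed symmetric monoidal category. So my plan is to isolate the idempotency computation and then assemble the rest formally. \textbf{Step 1 (the only real computation): $\mu$ is an isomorphism.} I would prove this by inspecting the explicit presentation of $E$ from the previous section together with the description of $\wedge$ as representing based bilinear functors, which gives $\mult(E\wedge E,\cP)\cong Bilin_*(E,E:\cP)$ naturally in $\cP$. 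Since $E$ has only the two objects $0$ and $1$, basedness forces a bilinear $f\colon(E,E)\to\cP$ to send every pair involving $0$ to the basepoint monoid of $\cP$, so $f$ is determined by the object $f(1,1)$, the two based multifunctors $f(1,-),f(-,1)\colon E\to\cP$, and the interchange axiom (A3); unwinding the multihom sets $E(a_1,\dots,a_n:b)=\{\ast\text{ if }\Sigma a_i=b\}$ shows that this data is precisely a based multifunctor $E\to\cP$, i.e. a module over the basepoint monoid of $\cP$. Hence $\mult(E\wedge E,-)\cong\mult(E,-)$, and chasing $\mathrm{id}_E$ through the identification shows the representing isomorphism is $\mu$. (This is the computation carried out in \cite{EM}.)

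\textbf{Step 2: the free‑module monad is idempotent, and the final sentence.} Let $T=(-)\wedge E$ be the monad on $\mult$ with $Mod_E=\mult^{T}$; its unit is $\eta_{\cX}\colon\cX\cong\cX\wedge\mathbf 1\to\cX\wedge E$ induced by the unit of the monoid $E$, and its multiplication is $\mathrm{id}_{\cX}\wedge\mu$. By Step 1 the multiplication is an isomorphism, so $T$ is an idempotent monad. By the standard theory of idempotent monads the forgetful functor $Mod_E\to\mult$ is then fully faithful, realizing $Mod_E$ as the full (reflective) subcategory on those $\cM$ for which $\eta_{\cM}$ is invertible, and for each $E$‑module $\cM$ the action map $\cM\wedge E\to\cM$ is the two‑sided inverse of $\eta_{\cM}$, in particular an isomorphism — which is the closing claim of the theorem.

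\textbf{Step 3: monoidal and enriched structure.} First, $Mod_E$ is closed under $\wedge$ inside $\mult$: for $E$‑modules $\cM,\cN$ one checks $(\cM\wedge\cN)\wedge E\cong\cM\wedge(\cN\wedge E)\cong\cM\wedge\cN$, exhibiting $\cM\wedge\cN$ as $L$‑local, hence (by Step 2) as an $E$‑module (concretely, act through either tensor factor). Since also $\cM\wedge E\cong\cM$ by Step 2, the object $E$ is a $\wedge$‑unit for $E$‑modules, so $(Mod_E,\wedge,E)$ is a symmetric monoidal full subcategory of $(\mult,\wedge)$. This product agrees with the relative tensor $\wedge_E$: in the coequalizer presentation $\cM\wedge E\wedge\cN\rightrightarrows\cM\wedge\cN$ of $\cM\wedge_E\cN$, one leg is the $\cM$‑action smashed with $\cN$, an isomorphism by Step 2, and the two legs coincide because the forgetful functor from $Mod_E$ is faithful and $E$ is commutative, so the coequalizer is just $\cM\wedge\cN$; thus $\wedge_E\cong\wedge$ on $Mod_E$. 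Finally, the $\cat$‑enrichment is simply the one $\mult$ already carries (the Elmendorf--Mandell theorem quoted above) restricted to the full subcategory $Mod_E$; as all of the data above restricts, $Mod_E$ is a symmetric monoidal $\cat$‑enriched subcategory of $\mult$ with product $\wedge_E\cong\wedge$.

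\textbf{Main obstacle.} The only step that is not formal is Step 1: the idempotency $E\wedge E\cong E$ genuinely requires the explicit combinatorics of the multicategory $E$ and of the bilinear description of $\wedge$, not any abstract feature of $\mult$. Once that is in hand, Steps 2 and 3 are bookkeeping with idempotent monads, reflective subcategories, and the coequalizer defining $\wedge_E$, and the verification that the monoidal and enriched data restrict compatibly to the full subcategory is routine.
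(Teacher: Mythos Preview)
The paper does not actually supply a proof of this theorem: it is stated as a result of Elmendorf--Mandell and cited from \cite{EM}. What the paper \emph{does} prove is the companion statement, Proposition~\ref{modstruc1} ($\cM\cong\cM^{E}$), remarking that ``the proof is similar to that of Theorem~\ref{Shulman}.'' That argument hinges on precisely the idempotency $E\wedge E\cong E$ you isolate in Step~1 (invoked there via \cite[Thm~5.1]{EM}), and then runs a Yoneda/naturality chase analogous to your Step~2. So your identification of the single non-formal input is correct and matches the paper's implicit strategy, and Steps~2--3 are the standard consequences for modules over an idempotent commutative monoid.

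One small wrinkle in Step~3: your stated reason that the two legs $\cM\wedge E\wedge\cN\rightrightarrows\cM\wedge\cN$ coincide (``the forgetful functor is faithful and $E$ is commutative'') is not quite the argument. The clean justification is that in this reflexive coequalizer the common section $\cM\wedge\eta\wedge\cN$ is a right inverse to \emph{both} legs by the respective module unit axioms; since one leg is an isomorphism by Step~2, the section must be its two-sided inverse, which then forces the other leg to equal it. With that correction your proposal is a complete and correct proof.
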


The adjoint to the module structure map is also an isomorphism. The proof is similar to that of Theorem \ref{Shulman}.

\begin{prop}\label{modstruc1}
    For any E-module $\cM$, there is an isomorphism $\cM\rightarrow \cM^E$. In particular for an E-module $\cM$ we have isomorphisms \[\M\wedge E\cong \cM\cong \cM^E\]
\end{prop}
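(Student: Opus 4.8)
The plan is to leverage the symmetric monoidal closed structure on $\mult$ together with the fact, recorded in Theorem \ref{Shulman}, that $E$ is a commutative monoid with the property that the action map $\M \wedge E \to \M$ is an isomorphism for every $E$-module $\M$. The key formal observation is that in any symmetric monoidal closed category, if $E$ is a monoid then the cotensor (internal hom) $(-)^E$ is right adjoint to $(-)\wedge E$, and an $E$-module structure on $\M$ is equivalently a coaction $\M \to \M^E$ satisfying the evident counit and coassociativity conditions. So my first step is to identify the canonical map $\cM \to \cM^E$ as the adjoint transpose of the module action map $a\colon \cM \wedge E \to \cM$ (equivalently, the map classifying the action), and to note that this is precisely the comodule-style structure map of the $E$-module $\cM$.

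Next I would exhibit an explicit inverse. Since $a\colon \cM \wedge E \to \cM$ is an isomorphism by Theorem \ref{Shulman}, the unit map $u\colon \mathbf{1} \to E$ (the monoid unit of $E$, coming from the based multifunctor selecting the basepoint monoid, or more precisely the unit of $E$ as a commutative monoid in $\mult$) induces $\cM \cong \cM \wedge \mathbf{1} \xrightarrow{\cM \wedge u} \cM \wedge E \xrightarrow{a} \cM$, and one of the module axioms says this composite is the identity. Dually, applying $(-)^{(-)}$ contravariantly in the exponent, the map $u$ gives $\cM^E \xrightarrow{\cM^u} \cM^{\mathbf{1}} \cong \cM$, and I claim this is a two-sided inverse to $\cM \to \cM^E$. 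The verification that it is a left inverse is exactly the counit axiom for the coaction; the verification that it is a right inverse should follow by adjunction from the corresponding statement for $\cM \wedge E \to \cM$, i.e.\ it is the mate of the fact established in Theorem \ref{Shulman}. Concretely, one forms the diagram

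\begin{equation*}
\cM \longrightarrow \cM^E \xrightarrow{\cM^u} \cM^{\mathbf{1}} \cong \cM
\end{equation*}

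and checks it is the identity using the triangle identity for the adjunction $(-)\wedge E \dashv (-)^E$ together with the module unit axiom, and then that the composite in the other order is the identity using coassociativity plus the fact that $a$ is invertible.

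The cleanest route, and the one I would actually write, is to avoid re-deriving the module axioms by hand: instead I would argue that $(-)\wedge E$ and $(-)^E$ are both idempotent (co)monads on $\mult$ whose (co)algebras are exactly $E$-modules — this is essentially the content of "$E$ is idempotent as a monoid", which is what Theorem \ref{Shulman} and the preceding proposition are really saying. For an idempotent monad, the algebras are precisely the objects on which the unit is an isomorphism; dualizing, for the induced idempotent comonad $(-)^E$ the coalgebras are precisely the objects on which the counit $\cM^E \to \cM$ is an isomorphism. Since $E$-modules are the coalgebras for $(-)^E$, we get $\cM \cong \cM^E$ for every $E$-module $\cM$ immediately, and combining with Theorem \ref{Shulman} yields the chain $\M \wedge E \cong \cM \cong \cM^E$. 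The main obstacle, as the authors themselves flag when they write "the proof is similar to that of Theorem \ref{Shulman}," is purely bookkeeping: one must check that the closed-category adjunction $\mult(\M\wedge\N,\cP)\cong\mult(\M,\cP^\N)$ is compatible with the commutative monoid structure on $E$ in the way needed to transport idempotency from the tensor side to the cotensor side, and that the naturality/coherence isomorphisms ($\M\wedge\mathbf 1\cong\M$, $\cM^{\mathbf 1}\cong\cM$, associativity of $\wedge$ vs.\ the exponential law $(\cM^E)^E\cong\cM^{E\wedge E}$) all fit together. None of this is deep, but it is where all the actual work lies; everything else is formal manipulation in a symmetric monoidal closed category.
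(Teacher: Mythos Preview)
Your overall strategy is sound and you correctly locate the one nontrivial step, but you do not actually carry it out, and the specific justifications you offer for it do not work as stated. The easy direction---that $\cM^u \circ \hat a = \mathrm{id}_{\cM}$---is indeed just the transposed unit axiom. The other direction, $\hat a \circ \cM^u = \mathrm{id}_{\cM^E}$, is not simply ``the mate of the fact established in Theorem \ref{Shulman}'': the adjoint transpose of an isomorphism need not be an isomorphism, so knowing that $a\colon \cM\wedge E\to\cM$ is invertible does not by itself force $\hat a$ to be invertible. Likewise, your ``cleanest route'' via idempotent (co)monads is circular: the assertion that the coalgebras for $(-)^E$ are exactly the $E$-modules \emph{is} the proposition, and the general fact that an idempotent monoid in a closed symmetric monoidal category yields coinciding reflective and coreflective subcategories is precisely what requires proof here.

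The paper closes this gap by a Yoneda argument rather than by producing a direct inverse. It shows that for any based multicategory $\cN$ and any $E$-module $\cM$, the map
\[
-\wedge E\colon \mult(\cN,\cM)\longrightarrow \mult(\cN\wedge E,\cM\wedge E)
\]
is a bijection, exhibiting an explicit inverse that sends $f\colon \cN\wedge E\to\cM\wedge E$ to the composite $\cN\cong\cN\wedge u\to\cN\wedge E\xrightarrow{f}\cM\wedge E\cong\cM$; both round-trip identities reduce to a naturality square together with the fact (from \cite[Thm 5.1]{EM}) that $\cM\to\cM\wedge E\cong\cM$ is the identity. Chaining this with the $\wedge$--internal-hom adjunction and $\cM\wedge E\cong\cM$ yields $\mult(\cN,\cM)\cong\mult(\cN,\cM^E)$ naturally in $\cN$, hence $\cM\cong\cM^E$. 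If you unwind it, this bijection is exactly what is needed to show your proposed inverse $\cM^u$ is monic (equivalently, that precomposition with $\cN\wedge u$ is injective on maps into $\cM$), which is the missing half of your argument.
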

\begin{proof}
Let $u$ denote the based multicategory $*\amalg [0]$, where [0] is the terminal category considered as a multicategory, which serves as the monoidal unit for $\wedge$.  The unit for the monoid E is given by the natural map $u\rightarrow E$.  In particular the multifunctor $E\cong u\wedge E\rightarrow E\wedge E$ is an inverse isomorphism to the monoidal product $E\wedge E\rightarrow E$.  Now we will show that for any based multicategory $N$, the map $-\wedge E:\mult(\N, \M)\rightarrow \mult(\N\wedge E, \M\wedge E)$ is an isomorphism.  This will give a chain of natural isomorphisms \[\mult(\N, \M)\cong \mult(\N\wedge E, \M\wedge E)\cong \mult(\N\wedge E, \M)\cong \mult(\N, \M^E) \]
which shows that $\M\cong \M^E$.

Our proposed inverse to $-\wedge E$ is the map $\phi:\mult(\N\wedge E, \M\wedge E)\rightarrow \mult(\N, \M)$ which sends $\N\wedge E\xrightarrow{f} \M\wedge E$ to the composite \[\N\cong \N\wedge u\rightarrow \N\wedge E\xrightarrow{f}\M\wedge E\cong \M\]

Let $f:\N\rightarrow \M$ be any multifunctor. The composite $\phi\circ (-\wedge E)$ takes $f$ to the multifunctor \[\N\rightarrow \N\wedge E\xrightarrow{f\wedge E} \M\wedge E\xrightarrow{\cong}\M \]

By naturality the square

\begin{center}
\begin{tikzcd}
\N\ar[d, "f"']\ar[r]   &   \N\wedge E\ar[d, "f\wedge E"] \\
\M\ar[r]   &   \M\wedge E
\end{tikzcd}
\end{center}
commutes and so we can replace the composite with \[\N\xrightarrow{f}\M\rightarrow \M\wedge E\cong\M \]
But the composite of the last two multifunctors is the identity because the two multifunctors are inverse isomorphisms, as is spelled out in \cite[Thm 5.1]{EM}, and so the composite reduces to just $f$.

The exact same naturality argument proves that the composite $-\wedge E\circ \phi$ is also the identity.
\end{proof}

\begin{lem}\label{fullfaith}
The fully faithful inclusion $Mod_E\hookrightarrow \mult$ admits left and right adjoints
\begin{center}
    \begin{tikzcd}
    Mod_E\ar[r, hook,""{name=A, below}] & \mult\ar[l,bend left, "(-)^E",""{name=B,above}] \ar[from=A, to=B, symbol=\dashv]\ar[l,bend right,  "E\wedge-"',""{name=C,above}] \ar[from=A, to=B, symbol=\dashv]\ar[ from=C, to=A, symbol=\dashv]
    \end{tikzcd}
\end{center}
\end{lem}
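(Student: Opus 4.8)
The plan is to build the adjunctions directly from the monoidal structure on $\mult$ together with the computations already established in Proposition \ref{modstruc1}. Since $E$ is a commutative monoid in the closed symmetric monoidal category $\mult$, the functors $E\wedge(-)$ and $(-)^E$ are an adjoint pair $E\wedge(-)\dashv(-)^E$ on all of $\mult$ (this is the $\wedge$--$\mathrm{Hom}$ adjunction precomposed with multiplication by $E$, or equivalently the free--forgetful style adjunction for the monad $E\wedge(-)$ and comonad $(-)^E$). So the first step is to record that both of these functors actually land in $Mod_E$: for any based multicategory $\N$, the object $E\wedge\N$ carries an $E$-module structure via $E\wedge E\wedge\N\to E\wedge\N$ using the monoid multiplication, and dually $\N^E$ is an $E$-module; these are the free $E$-module and cofree $E$-module on $\N$ respectively. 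This is formal from $E$ being a commutative monoid.

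Next I would show these corestricted functors are genuinely left and right adjoint to the inclusion $\iota\colon Mod_E\hookrightarrow\mult$. For the left adjoint, I need a natural isomorphism $\mult(E\wedge\N,\cM)\cong \mult(\N,\iota\cM)$ for $\cM$ an $E$-module. Starting from the defining adjunction $\mult(E\wedge\N,\cM)\cong\mult(\N,\cM^E)$ (or $\cong Bilin_*(E,\N:\cM)$) I use Proposition \ref{modstruc1}, which says $\cM^E\cong\cM$ naturally in $\cM$ for $E$-modules $\cM$, to rewrite the right-hand side as $\mult(\N,\cM)$. Dually, for the right adjoint I want $\mult(\iota\cM,\N^E)\cong\mult(\cM,\N)$; the defining adjunction gives $\mult(\cM,\N^E)\cong\mult(E\wedge\cM,\N)$, and Theorem \ref{Shulman} supplies $E\wedge\cM\cong\cM$ naturally for $E$-modules $\cM$, so the left-hand side becomes $\mult(\cM,\N)$. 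One should check that these chains of isomorphisms are natural and compatible with the basepoint structure, which is routine since every functor and isomorphism in sight is. This already exhibits the adjoint triple $E\wedge(-)\dashv\iota\dashv(-)^E$.

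Finally, there is the matter of $\iota$ being fully faithful, which is immediate as $Mod_E$ is by definition a full subcategory, and one may observe (as in the Remark after the transfer lemma) that fully faithfulness of $\iota$ forces the unit $\mathrm{id}\to\iota\circ(E\wedge-)|_{Mod_E}$ and counit $\iota\circ(-)^E|_{Mod_E}\to\mathrm{id}$ restricted to $Mod_E$ to be isomorphisms, recovering the isomorphisms $\cM\cong E\wedge\cM$ and $\cM\cong\cM^E$ as a consistency check. The main obstacle, such as it is, is purely bookkeeping: verifying that all the adjunction isomorphisms respect the based structure and assemble naturally, i.e. that the corestrictions to $Mod_E$ are well defined on morphisms and that the triangle identities hold. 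There is no real conceptual difficulty here once Theorem \ref{Shulman} and Proposition \ref{modstruc1} are in hand — the content is in those two results, and this lemma is their formal consequence.
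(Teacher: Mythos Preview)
Your proposal is correct and follows essentially the same route as the paper: both arguments use the $\wedge$--internal hom adjunction together with the isomorphisms $E\wedge\cM\cong\cM$ (Theorem \ref{Shulman}) and $\cM^E\cong\cM$ (Proposition \ref{modstruc1}) for $E$-modules $\cM$ to identify $\mult(E\wedge\cN,\cM)\cong\mult(\cN,\cM)$ and $\mult(\cM,\cN^E)\cong\mult(\cM,\cN)$, plus fully faithfulness of the inclusion to pass between $Mod_E$-homs and $\mult$-homs. Your write-up is slightly more explicit about why $E\wedge\cN$ and $\cN^E$ land in $Mod_E$ and about naturality, but the argument is the same.
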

\begin{proof}
For any based multicategory $\cN$ and E-module $\cM$ there are isomorphisms. \[ Mod_E(\cN\wedge E, \cM)\cong \mult(\cN\wedge E, M)\cong \mult(\cN, \cM^E)\cong \mult(\cN, \cM)\] where the first isomorphism follows because $Mod_E\hookrightarrow \mult$ is fully faithful, treating $\cN\wedge E$ as an E-module by multiplying in the E coordinate, and the third is given by Lemma \ref{modstruc1}.  Likewise we have isomorphisms
\[ \mult(\cM, \cN)\cong \mult(\cM\wedge E, \cN)\cong \mult(\cM, \cN^E)\cong Mod_E(\cM, \cN^E) \]
which prove the desired adjunctions.
\end{proof}
From the definition of (co)reflective this proves
\begin{cor}
The category $Mod_E$ is a reflective and coreflective subcategory of the category of based multicategories.
\end{cor}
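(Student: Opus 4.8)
The plan is to unwind the definitions: a full subcategory $\mathcal{A}\hookrightarrow\mathcal{B}$ is \emph{reflective} exactly when the inclusion admits a left adjoint, and \emph{coreflective} exactly when the inclusion admits a right adjoint. By Theorem \ref{Shulman} the inclusion $Mod_E\hookrightarrow\mult$ is full (it is a full symmetric monoidal $Cat$-enriched subcategory), so both notions apply to it, and there is genuinely nothing left to prove beyond invoking Lemma \ref{fullfaith}.

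Concretely, I would argue as follows. Lemma \ref{fullfaith} exhibits the adjoint triple $E\wedge-\dashv(Mod_E\hookrightarrow\mult)\dashv(-)^E$. The left adjoint $E\wedge-$ is then, by definition, a reflector for $Mod_E$, and the right adjoint $(-)^E$ is a coreflector; combined with fullness of the inclusion this is precisely the assertion that $Mod_E$ is a reflective and coreflective subcategory. The one point worth a sentence is that these functors do land in $Mod_E$: for a based multicategory $\cN$ the object $\cN\wedge E$ inherits an $E$-module structure by multiplying in the $E$-coordinate along $E\wedge E\to E$ (using that $E$ is a commutative monoid, by the preceding Elmendorf--Mandell proposition), and dually $\cN^E$ is an $E$-module; this is already built into the formulation of Lemma \ref{fullfaith} via the identifications of Proposition \ref{modstruc1}.

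There is no real obstacle here — the entire mathematical content is carried by Lemma \ref{fullfaith}, and the corollary is simply a repackaging of that lemma in the standard terminology. If one wanted to say slightly more, one could note (as in the adjoint-triple remark recalled earlier) that fullness of the inclusion forces both outer adjoints to be fully faithful as well, so the reflector and coreflector land in $Mod_E$ "on the nose"; but this refinement is not needed for the statement as phrased.
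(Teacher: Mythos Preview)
Your proposal is correct and matches the paper's approach: the corollary is stated immediately after Lemma \ref{fullfaith} with the remark ``From the definition of (co)reflective this proves,'' so the paper also treats it as a direct unpacking of definitions given the adjoint triple. The extra commentary you provide about why $E\wedge-$ and $(-)^E$ land in $Mod_E$ is fine but, as you note, already absorbed into the lemma.
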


\begin{rem} 
The right adjoint $(-)^E$ can be though of as taking a multicategory to the category of modules over the basepoint monoid, in the sense that the collection of multifunctors $\mult(E, \M)$ is the same as the collection of objects of $\M$ which have a specified module structure over the basepoint. In this way one can think of an E-module as something like a subcategory of a category of modules over a monoid.
\end{rem}

\begin{ex}
    Let $R$ be a commutative ring. The category $Mod_R$ of $R$-bimodules, considered as a multicategory with the symmetric monoidal structure $\tensor_R$, carries the structure of an E-module.
\end{ex}

To obtain our model structure on $Mod_E$, we will describe a left adjoint to the functor $NJ = \nmult(E^\bullet, -)$, and we will use the resulting adjunction to transfer the stable Q-model structure.  To do this we will need to know that this left adjoint lands naturally in $Mod_E$.

We let the left adjoint of the nerve functor be denoted $ho$.

\begin{lem}
    The functor $NJ\colon Mult*\to \Gamma\textit{-sSet}$ admits a left adjoint which lands in $Mod_E$.
\end{lem}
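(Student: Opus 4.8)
The plan is to split $NJ$ as $NJ = N\circ J$, where $J\colon\mult\to\Gamma\textit{-Cat}$ is the functor of Section~1 and $N\colon\Gamma\textit{-Cat}\to\Gamma\textit{-sSet}$ denotes the nerve applied in each $\Gamma$-level. The levelwise nerve has the evident levelwise left adjoint $ho$, so it suffices to produce a left adjoint to $J$ that takes values in $Mod_E$ and then compose it with this levelwise $ho$.

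To build the left adjoint to $J$, I would recognize $J = \mult(E^\bullet,-)$ as the $Cat$-enriched nerve associated to the diagram $E^\bullet\colon\Gamma^{op}\to\mult$ and compute its left adjoint by the usual nerve--realization coend. Let $F\colon Cat\to\mult$ be the left adjoint of the underlying-category functor $\mult\to Cat$ (explicitly $F(C)=*\amalg C$, the category $C$ viewed as a multicategory with only unary arrows together with a freely adjoined basepoint monoid), and for a small category $C$ and $\cM\in\mult$ set $C\odot\cM := F(C)\wedge\cM$; the Elmendorf--Mandell adjunction $\mult(\M\wedge\N,\cP)\cong\mult(\M,\cP^{\N})$ together with $F\dashv(\,\text{underlying category}\,)$ gives immediately that $\mult(C\odot\cM,\cN)\cong Cat\big(C,\mult(\cM,\cN)\big)$, so $C\odot\cM$ is a $Cat$-copower. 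Since $\mult$ is cocomplete, for $X\in\Gamma\textit{-Cat}$ we may form the coend
\[
\mathbf{L}(X)\ :=\ \int^{n\in\Gamma} X_n\odot E^n ,
\]
and a routine manipulation (pull the set $\mult(\mathbf{L}(X),\M)$ through the coend, apply the copower adjunction in each level, and identify the resulting end over $\Gamma$ with the set of natural transformations $X\Rightarrow J(\M)$) shows $\mathbf{L}\dashv J$.

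It then remains to check that $\mathbf{L}$, and hence $\mathbf{L}\circ ho$, lands in $Mod_E$, for which three observations suffice. First, $E$ is a module over itself (indeed $E\cong u\wedge E$ is the free $E$-module on $u$), so $E\in Mod_E$; since $Mod_E\hookrightarrow\mult$ is reflective (Lemma~\ref{fullfaith}) it is closed under limits formed in $\mult$, so every cartesian power $E^n$ is again an $E$-module. Second, $Mod_E$ absorbs $\wedge$: if $\cM\in Mod_E$ then for any $\cN$ we have $(\cN\wedge\cM)\wedge E\cong\cN\wedge(\cM\wedge E)\cong\cN\wedge\cM$ by symmetry of $\wedge$ and Theorem~\ref{Shulman}, so $\cN\wedge\cM\in Mod_E$; in particular $X_n\odot E^n = F(X_n)\wedge E^n\in Mod_E$. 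Third, $Mod_E\hookrightarrow\mult$ is also coreflective (Lemma~\ref{fullfaith}), so the inclusion preserves colimits and $Mod_E$ is closed under colimits formed in $\mult$. The coend defining $\mathbf{L}(X)$ is such a colimit of objects of $Mod_E$, hence $\mathbf{L}(X)\in Mod_E$.

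I do not expect a serious obstacle: everything rests on Theorem~\ref{Shulman} and Lemma~\ref{fullfaith}, which are already in hand. The one point needing care is the bookkeeping in the last step --- in particular that the \emph{cartesian} powers $E^n$ are $E$-modules, which is why the reflectivity (not merely the coreflectivity) of $Mod_E$ is used. If one prefers to avoid writing down the coend, the conclusion also follows abstractly: $NJ$ preserves limits and $\mult$ is locally presentable, so a left adjoint exists by the adjoint functor theorem, while the natural isomorphism $\mult(E^n,\M^E)\cong\mult(E^n\wedge E,\M)\cong\mult(E^n,\M)$ (valid because $E^n\in Mod_E$, so $E^n\wedge E\cong E^n$ by Theorem~\ref{Shulman}) exhibits $NJ$ as $(NJ|_{Mod_E})\circ(-)^E$; since $(-)^E\colon\mult\to Mod_E$ has the inclusion as its left adjoint, the left adjoint of $NJ$ is the inclusion composed with a left adjoint of $NJ|_{Mod_E}$, and hence factors through $Mod_E$.
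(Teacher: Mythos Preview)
Your argument is correct and the core construction is the same as the paper's: both realize the left adjoint as the coend $\int^{m\in\Gamma} ho(X_m)\odot E^m$, with the standard nerve--realization manipulation establishing the adjunction. The only genuine difference is in the verification that the result lands in $Mod_E$. The paper invokes \cite[Thm~5.12]{EM} directly for the fact that each cartesian power $E^m$ is an $E$-module, and then builds the module structure map on the coend by hand using that $\wedge$ commutes with colimits. You instead deduce $E^n\in Mod_E$ from reflectivity (closure under limits), absorb the smash factor via Theorem~\ref{Shulman}, and use coreflectivity for closure under the coend --- all from Lemma~\ref{fullfaith}, which has already been proved at this point in the paper. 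Your route is more self-contained and makes clearer \emph{why} the result holds categorically, at the cost of a small amount of extra abstraction; the paper's route is shorter but leans on an external citation. Your closing abstract alternative (factoring $NJ$ as $NJ|_{Mod_E}\circ(-)^E$) is a nice bonus that the paper does not give.
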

    \begin{proof}
    This follows by a direct computation. Let $X$ be in $\Gamma\textit{-sSet}$ and $\cM$ a based multicategory.
   \[\begin{array}{rcl} \Gamma\textit{-sSet}(X, NJ(\cM))&\cong& \int_{m\in \Gamma}sSet_*(X_m, \mult(E^m, N\cM)) \\
    &\cong& \int_{m\in \Gamma}\mult(ho(X_m)\wedge E^m, \cM) \\
    &\cong &\mult(\int^{m\in \Gamma}ho(X_m)\wedge E^m, \cM) \\ 
   & = & \mult(X\tensor_{\Gamma} E^*, \cM) 
    
    \end{array}\]
    
    To see that the functor $X\tensor_{\Gamma_*} E^*$ lands in $Mod_E$, we use \cite[Thm 5.12]{EM} which says that for each $m\in \Gamma$, $E^m$ is an E-module, and since the monoidal product $\wedge$ in $\mult$ preserves colimits, we can define the module structure map \[ X\tensor_{\Gamma} E^* \wedge E \to X\tensor_{\Gamma} E^*\] on each component $X_m\wedge E^m$ using the module structure maps of $E^m$.
    \end{proof}

\section{The Model Structure on $Mod_E$}
The aim of this section is to use Kan's Transfer Theorem \ref{theoremtransferredmodelstructure} to prove

\begin{thm}\label{modelstructure}
   The transferred model structure on $Mod_E$ along the right adjoint $NJ$ from the stable Q-model structure exists.
\end{thm}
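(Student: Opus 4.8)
The plan is to verify the hypotheses of Kan's Transfer Theorem \ref{theoremtransferredmodelstructure} for the adjunction whose left adjoint is $-\tensor_{\Gamma}E^*$ and whose right adjoint is $NJ$, between the stable Q-model structure on $\Gamma\textit{-sSet}$ and $Mod_E$. Bicompleteness of $Mod_E$ is immediate from Lemma \ref{fullfaith}: as a reflective and coreflective subcategory of the bicomplete category $\mult$ it has all small limits and colimits. For the smallness condition it suffices that $NJ$ preserve filtered colimits; in each simplicial degree $m$, $NJ(\cM)$ is the nerve of $\mult(E^m,\cM)$, and the nerve functor preserves filtered colimits (its value in each degree is a finite limit of the object- and hom-sets), while $\mult(E^m,-)$ preserves filtered colimits because $E^m$, being a cartesian power of $E$, is a finite multicategory and hence a compact object of $\mult$. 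Thus everything reduces to the acyclicity condition: $NJ$ must carry every transfinite composite of pushouts of maps in $F(J)$, for $F=-\tensor_{\Gamma}E^*$ and $J$ Schwede's generating set of stable Q-acyclic cofibrations, to a stable Q-equivalence.

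For the acyclicity condition I would use the standard path-object argument, which reduces it to producing, functorially: (a) a fibrant replacement $\cM\to R\cM$ in $Mod_E$ with $NJ(\cM)\to NJ(R\cM)$ a stable Q-equivalence and $NJ(R\cM)$ stably Q-fibrant; and (b) for each $R\cM$ a good path object $R\cM\to P(R\cM)\to R\cM\times R\cM$ whose first leg $NJ$ takes to a stable Q-equivalence and whose second leg $NJ$ takes to a stable Q-fibration. Part (b) is straightforward: using the $2$-categorical (indeed Cat-enriched, Theorem \ref{Shulman}) structure on $Mod_E$ one takes $P(R\cM)=(R\cM)^{\I}$, the cotensor by the free-living isomorphism $\I$. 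In simplicial degree $m$ the first leg is then the nerve of the functor $\mult(E^m,R\cM)\to\Fun(\I,\mult(E^m,R\cM))$ sending an object to its identity isomorphism, an equivalence of groupoids, hence a levelwise -- a fortiori stable -- Q-equivalence; and the second leg is a levelwise (i.e. strict Q-) fibration whose source and target are stably Q-fibrant (their degreewise pieces are nerves of groupoids, their degreewise $\pi_0$ is the same group, and they are special since $\Fun(\I,-)$ and the nerve preserve finite products), hence a stable Q-fibration by Schwede's theorem.

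The main obstacle is part (a): one must construct, before the model structure is available, a functorial replacement $R$ on $Mod_E$ whose $K$-theory is simultaneously levelwise Kan (so each category $\mult(E^m,R\cM)$ must be a groupoid), has $\pi_0$ of its first level an abelian group, and is special. I would build $R\cM$ as a ``group(oid) completion'' of $\cM$ carried out inside $Mod_E$ -- formally inverting the morphisms of the underlying categories and group-completing the basepoint monoid, applying the reflector $E\wedge-$ or coreflector $(-)^E$ to land back in the subcategory -- and verify the three fibrancy conditions for $NJ(R\cM)$ using the structure of $E^\bullet$ from \cite{EM}, in particular its behaviour on the Segal maps. This step is essentially the input needed for the analysis of fibrant objects in Section 5, and is where the real work lies; granting (a) and (b), the path-object argument gives the acyclicity condition and Kan's theorem produces the transferred model structure.
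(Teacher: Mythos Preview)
Your overall plan---Kan transfer plus Quillen's path object argument, with bicompleteness from bireflectivity and smallness from compactness of the $E^m$---is exactly the paper's.

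Where you diverge is in the path object. The paper does not cotensor by the walking isomorphism $\I$; instead it introduces a concrete based multicategory $I$ consisting of the basepoint monoid, two module objects over it, and a single module map between them, and proves (Lemma~\ref{arrow}) that $\mult(I,\cM)\cong\mult(E,\cM)^{[1]}$, so that $NJ(\cM^I)\cong NJ(\cM)^{\Delta[1]}$. The factorization $\cM\to\cM^I\to\cM\times\cM$ is induced by the sequence $E\vee E\hookrightarrow I\to E$. The first leg is shown to be a levelwise weak equivalence by Quillen's Theorem~A (each slice $A/0^*$ has the identity module map as initial object), so no groupoid hypothesis on $\mult(E^m,\cM)$ is needed for that part; fibrancy of $\cM$ enters only for the second leg, in order to invoke that a strict Q-fibration between stably Q-fibrant objects is a stable Q-fibration. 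Your $\I$-cotensor would also work once $R\cM$ is fibrant, but you would need to verify that $Mod_E$ is genuinely cotensored over $\cat$ with the isomorphism $\mult(E^m,(R\cM)^{\I})\cong\Fun(\I,\mult(E^m,R\cM))$ you invoke; the paper avoids this by staying entirely inside the $\mult$-enrichment and then passing to nerves.

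On fibrant replacement (your part (a)): you are right to flag it as the crux, and in fact the paper does not construct one either---the proof of Proposition~\ref{path} simply begins ``choose stably fibrant replacements $\tilde X,\tilde Y$'' without indicating their provenance. So the paper's written argument addresses only the path-object half of Quillen's hypotheses; your proposed groupoid-completion construction would, if carried out, supply something the paper leaves implicit.
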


We will call a based multifunctor a (stable) strict fibration or a (stable) strict weak equivalence if it is taken by $NJ$ to one.

As with most transfers of model structure, the crux of the argument is in verifying the condition (2) of Theorem \ref{theoremtransferredmodelstructure}. To show this we will appeal to Quillen's path object argument, a proof of which is given in Proposition \ref{path} for the readers convenience.

\begin{thm}[Quillen]
   Condition (2) of the Kan Transfer Theorem holds if the proposed model structure admits
   \begin{itemize}
       \item A functorial fibrant replacement functor.
       \item For fibrant $X$, functorial factorization of the diagonal $X\rightarrow X\times X$ into a weak equivalence followed by a fibration.
   \end{itemize}
\end{thm}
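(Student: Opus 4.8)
The plan is to unwind condition (2) into a lifting-and-homotopy statement and then run Quillen's path object argument. Throughout, given the adjunction $\adj{F}{\cM}{\cN}{G}$, call a map $p$ in $\cN$ a \emph{fibration} (resp. \emph{weak equivalence}) if $G(p)$ is one in $\cM$; these are the candidate fibrations and weak equivalences of the proposed structure, and $G$ preserves fibrations, products, and the terminal object $1$ because it is a right adjoint. First I would observe that, since fibrations in the cofibrantly generated category $\cM$ are exactly the maps with the right lifting property against $J$, the adjunction identifies the fibrations of $\cN$ with the maps having the right lifting property against $F(J)$. Consequently each map of $F(J)$, and hence every transfinite composite of pushouts of such maps, has the left lifting property against all fibrations of $\cN$, this class being closed under pushout and transfinite composition. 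Condition (2) therefore follows once I show that \emph{any} map $i\colon A\to B$ with the left lifting property against fibrations satisfies that $G(i)$ is a weak equivalence in $\cM$.

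To prove this I would use both hypotheses to manufacture a homotopy inverse to $G(i)$ in $\ho(\cM)$. The functorial fibrant replacement supplies weak equivalences $\alpha\colon A\to RA$ and $\beta\colon B\to RB$ into fibrant objects, together with $Ri\colon RA\to RB$ satisfying $Ri\circ\alpha=\beta\circ i$. Since $RA\to 1$ is a fibration (as $RA$ is fibrant), the left lifting property of $i$ produces $s\colon B\to RA$ with $s\circ i=\alpha$. Next, as $RB$ is fibrant the path object hypothesis gives a factorization $RB\xrightarrow{\;\sigma\;}P\xrightarrow{\;(d_0,d_1)\;}RB\times RB$ of the diagonal with $\sigma$ a weak equivalence and $(d_0,d_1)$ a fibration. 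Using $Ri\circ s\circ i=Ri\circ\alpha=\beta\circ i$, the square with top $\sigma\circ\beta\circ i\colon A\to P$, left leg $i$, right leg $(d_0,d_1)$, and bottom $(Ri\circ s,\beta)\colon B\to RB\times RB$ commutes; lifting $i$ against $(d_0,d_1)$ yields $H\colon B\to P$ with $d_0H=Ri\circ s$ and $d_1H=\beta$, i.e. a right homotopy $Ri\circ s\simeq\beta$.

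Finally I would transport this datum across $G$. Because $G$ preserves products and fibrations and inverts $\sigma$ (weak equivalences being created by $G$), the image $G(RB)\xrightarrow{G\sigma}G(P)\xrightarrow{(Gd_0,Gd_1)}G(RB)\times G(RB)$ is an honest path object in $\cM$, so $G(H)$ exhibits $G(Ri\circ s)$ and $G(\beta)$ as right homotopic, hence equal in $\ho(\cM)$. Writing $[-]$ for the class in $\ho(\cM)$, I then have $[Gs][Gi]=[G\alpha]$ and $[G(Ri)][Gs]=[G\beta]$ with $[G\alpha]$ and $[G\beta]$ invertible. The first identity makes $[Gs]$ a split epimorphism and the second makes it a split monomorphism, so $[Gs]$ is an isomorphism; then $[Gi]=[Gs]^{-1}[G\alpha]$ is an isomorphism as well. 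Since a morphism of a model category is a weak equivalence precisely when its image in the homotopy category is invertible, $G(i)$ is a weak equivalence, which is what condition (2) requires.

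The step I expect to be the genuine obstacle is the middle one: the left lifting property against fibrations does not by itself force a weak equivalence, and the two hypotheses are exactly the tools needed to build the homotopy inverse. The delicate points to get right are the bookkeeping of the two lifting problems—arranging the squares so that the solved lifts assemble into the relations $s\circ i=\alpha$ and $Ri\circ s\simeq\beta$—and the verification that $G$ carries the chosen path object of $\cN$ to a genuine path object of $\cM$, which is precisely what legitimizes pushing the homotopy through $G$.
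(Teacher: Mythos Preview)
Your argument is correct and is a recognizable form of Quillen's path object argument, but it is organized differently from the paper's proof (given as Proposition~\ref{path}). The paper works entirely inside $\cN$: it forms the mapping path space $Pf$ as the pullback of the path object fibration $\tilde{Y}^I\twoheadrightarrow \tilde{Y}\times\tilde{Y}$ along $(\tilde f,1)$, shows the induced map $X\to Pf$ is a weak equivalence, lifts $f$ against the fibration $Pf\twoheadrightarrow\tilde{Y}$, and concludes that $f$ is a weak equivalence by the two-out-of-six property applied to $X\to Y\to Pf\to\tilde{Y}$. Your version instead produces a one-sided inverse $s\colon B\to RA$ and a right homotopy $Ri\circ s\simeq\beta$, then transports everything through $G$ and finishes by a split mono/split epi argument in $\ho(\cM)$, invoking the saturation of weak equivalences. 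Both routes are standard; the paper's is a bit shorter because it never leaves $\cN$ and so avoids the verification that $G$ carries path objects to path objects, while yours makes the role of the adjunction more explicit and replaces two-out-of-six by an elementary argument in the homotopy category.
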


The goal then is to prove the existence of functorial path objects for stably fibrant E-modules. We begin by noting

\begin{lem}
    The category $Mod_E$ is complete and cocomplete.
\end{lem}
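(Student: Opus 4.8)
The plan is to derive completeness and cocompleteness of $Mod_E$ directly from the fact, established in Lemma \ref{fullfaith}, that $Mod_E$ sits inside $\mult$ as both a reflective and a coreflective subcategory, together with the Elmendorf--Mandell theorem that $\mult$ itself is complete and cocomplete. This is a purely formal argument: a reflective subcategory of a complete and cocomplete category is complete and cocomplete, with limits computed as in the ambient category and colimits computed by applying the reflector to the ambient colimit; dually a coreflective subcategory inherits colimits from the ambient category and computes limits by applying the coreflector. Since $Mod_E$ enjoys \emph{both} properties, it inherits limits and colimits ``on the nose'' from $\mult$ in the following strong sense.

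First I would recall that by Proposition \ref{modstruc1} and Theorem \ref{Shulman}, for an $E$-module $\cM$ the canonical maps $\cM \wedge E \to \cM$ and $\cM \to \cM^E$ are isomorphisms; equivalently, the reflector $E\wedge(-)$ and the coreflector $(-)^E$ restrict to the identity on $Mod_E$. Now, given a diagram $D\colon \cJ \to Mod_E$, form its limit $\lim D$ in $\mult$. Since the coreflector $(-)^E$ is a right adjoint it preserves this limit, and since $(-)^E$ lands in $Mod_E$ and is the identity on objects already in $Mod_E$, we conclude $(\lim D)^E \cong \lim D$, so $\lim D$ already lies in $Mod_E$; the inclusion being fully faithful, this object is the limit of $D$ in $Mod_E$. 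Dually, the reflector $E\wedge(-)$ is a left adjoint, hence preserves the colimit $\colim D$ formed in $\mult$, so $E \wedge \colim D \cong \colim D$ lies in $Mod_E$ and is the colimit there. Thus $Mod_E$ is closed under all small limits and colimits taken in $\mult$, and the result follows since $\mult$ is complete and cocomplete by Theorem \ref{Shulman} (Elmendorf--Mandell).

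There is really no substantial obstacle here; the only thing to be careful about is the direction of the adjunctions. One might be tempted to argue that a reflective subcategory inherits colimits by applying the reflector and limits on the nose --- which is true --- but to conclude that the ambient limit/colimit \emph{itself} already lands in $Mod_E$ one genuinely needs both adjoints, exploiting that each of $E\wedge(-)$ and $(-)^E$ is idempotent with image $Mod_E$ and acts as the identity on that image (Proposition \ref{modstruc1}). Given that, the proof is a three-line diagram chase and can be presented compactly as above. I would write it as: limits of $E$-modules are computed in $\mult$ because $(-)^E$ is a right adjoint fixing $Mod_E$, colimits are computed in $\mult$ because $E\wedge(-)$ is a left adjoint fixing $Mod_E$, and $\mult$ is bicomplete by Theorem \ref{Shulman}.
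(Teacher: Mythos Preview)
Your argument is correct and is essentially the same as the paper's, which likewise uses bireflectivity to conclude that the inclusion $Mod_E\hookrightarrow\mult$ creates all limits and colimits, then invokes bicompleteness of $\mult$. The only cosmetic difference is that the paper phrases the creation of (co)limits via the equivalent observation that $Mod_E$ is simultaneously the category of algebras for the idempotent monad $E\wedge(-)$ and of coalgebras for the idempotent comonad $(-)^E$, whereas you unpack the same fact directly from the adjunctions.
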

\begin{proof}
   As a bireflective subcategory, $Mod_E$ is simultaneously a category of algebras for the monad $E\wedge-$ and a category of coalgebras for the comonad $(-)^E$.  Hence the fully faithful inclusion $Mod_E\hookrightarrow \mult$ creates all limits and colimits. The claim now follows because $\mult$ is bicomplete.
\end{proof}

\begin{lem}
    The right adjoint $NJ$ preserves filtered colimits.
\end{lem}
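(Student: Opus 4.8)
The plan is to reduce, using the levelwise computation of colimits in $\Gamma\textit{-sSet}$ and in $\sset$, to a statement about the hom-categories $\mult(E^m,-)$, and then to exploit the finiteness of the multicategories $E^m$. Since the inclusion $Mod_E\hookrightarrow\mult$ creates all colimits, it suffices to show that $NJ\colon\mult\to\Gamma\textit{-sSet}$ preserves filtered colimits. Colimits in $\Gamma\textit{-sSet}$ are computed objectwise in $\Gamma$ and degreewise in $\sset$, so it is enough to check that for each $m\in\Gamma$ the functor $\cM\mapsto N(\mult(E^m,\cM))$ preserves filtered colimits; and since the nerve $N\colon\cat\to\sset$ preserves filtered colimits --- because $N(\cC)_n=\cat([n],\cC)$ and $[n]$ is a finitely presentable object of $\cat$ --- we are reduced to showing that $\mult(E^m,-)\colon\mult\to\cat$ preserves filtered colimits.

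For this I would use the explicit description of filtered colimits in $\mult$: for a filtered system $\{\cM_i\}$ the object set of $\colim_i\cM_i$ is $\colim_i\mathrm{ob}\,\cM_i$, and its multimorphism sets are the filtered colimits of the multimorphism sets of the $\cM_i$ taken over indices large enough to carry representatives of source and target, this being legitimate since filtered colimits commute with the finite limits occurring in the multicategory axioms. Each $E^m$ is a finitely presented multicategory: it has finitely many objects ($E$ itself has two), and although its nonempty multimorphism sets are singletons in every arity, as a multicategory it is generated under composition by finitely many of them --- for $E$, by the nullary and binary operations of the basepoint monoid together with the basic module action --- subject to finitely many relations, namely the monoid and module axioms. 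Consequently a based multifunctor $E^m\to\colim_i\cM_i$, being a choice of images for finitely many objects and finitely many generating arrows subject to finitely many equations, factors through some $\cM_i$, uniquely up to passage to a larger index; and the same argument applies to the finitely many component arrows and naturality relations that make up a based multinatural transformation. Hence the canonical comparison $\colim_i\mult(E^m,\cM_i)\to\mult(E^m,\colim_i\cM_i)$ is a bijection on objects and on morphism sets, i.e.\ an isomorphism of categories, as required.

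The levelwise reductions and the finitariness of the nerve are routine; the one delicate point is the descent to a finite index, where one must combine the explicit form of filtered colimits in $\mult$ with the finite presentability of the $E^m$ and be careful to argue for multinatural transformations as well as for multifunctors. Pinning down --- or citing from \cite{EM} --- that each $E^m$ is finitely presented is the step I expect to require the most care.
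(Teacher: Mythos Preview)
Your proposal is correct and follows essentially the same route as the paper: factor $NJ$ as $N\circ J$, use that $[n]$ is compact in $\cat$ so that the nerve preserves filtered colimits, and use that each $E^m$ is compact in $\mult$ so that $J=\mult(E^\bullet,-)$ does as well. The paper's proof is simply a terse two-line version of yours, asserting compactness of the $E^n$ without the explicit finite-presentation argument you supply.
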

\begin{proof}
   Recall that the nerve functor preserves filtered colimits because it is given levelwise by $Cat( [n], -)$ and $[n]$ is compact.  Likewise, $J$ preserves filtered colimits because the powers $E^n$ are compact.
\end{proof}

Our path object factorizations will be based on a multicategory which parameterizes module morphisms. Let $I$ denote the based multicategory consisting of a basepoint monoid, two modules, and an arrow between them which preserves the module structure. Rigorously I consists of objects 0, 1, 2 such that the full sub-multicategories on $\{0,1\}$ and $\{0,2\}$ are isomorphic to E, and such that the multimorphism sets $I(a_1,\dots,a_n:2)$ are empty unless exactly one of the $a_i$ is 1 or 2 and the rest are 0 in which case they are singleton sets. In particular there is a unique 1-arrow $1\rightarrow2$.

\begin{lem}\label{arrow}
    For any based multicategory $\cM$, there is an isomorphism of categories $\mult(I, \cM )\cong \mult(E,\cM)^{[1]}$.
\end{lem}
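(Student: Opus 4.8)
The plan is to unwind both categories by hand and produce a strict isomorphism, the content being packaged entirely into the definition of $I$. Write $j_1,j_2\colon E\hookrightarrow I$ for the two based fully faithful multifunctors whose images are the full sub-multicategories on $\{0,1\}$ and $\{0,2\}$, and $\iota\in I(1:2)$ for the unique $1$-arrow. Given a based multifunctor $F\colon I\to\cM$, restriction along $j_1$ and $j_2$ yields modules $F\circ j_1$ and $F\circ j_2$, i.e.\ objects of $\mult(E,\cM)$, while $F(\iota)$ is a $1$-arrow between their underlying objects. First I would check that $F\mapsto\bigl(F(\iota)\colon F\circ j_1\to F\circ j_2\bigr)$ is a well-defined map to the objects of $\mult(E,\cM)^{[1]}$ with an evident inverse. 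The point is that the only multimorphism sets of $I$ not already lying in one of the two copies of $E$ are the singletons $I(0,\dots,0,1:2)$, and by construction this arrow equals both $\iota\circ\mu^{(1)}_n$ and $\mu^{(2)}_n\circ(1_0,\dots,1_0,\iota)$, where $\mu^{(i)}_n$ is the $n$-th structure map of the $i$-th copy of $E$ inside $I$. Consequently functoriality of $F$ amounts exactly to the relations saying that $F(\iota)$ commutes with the module structure maps of $F\circ j_1$ and $F\circ j_2$, i.e.\ that $F(\iota)$ is a morphism of modules; conversely such a morphism of modules determines $F$ on all of $I$ with no remaining obstruction.

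For morphisms, a based multinatural transformation $\alpha$ in $\mult(I,\cM)$ has $\alpha_0=1$ (it is based) together with components $\alpha_1,\alpha_2$ at the objects $1,2$. I would observe that multinaturality against the arrows lying in the two copies of $E$ says precisely that $\alpha_1$ and $\alpha_2$ are morphisms of modules, that multinaturality against $\iota$ says the evident square relating $\alpha_1$, $\alpha_2$ and the two module maps commutes, and that multinaturality against the remaining arrows $I(0,\dots,0,1:2)$ is then automatic, using the factorizations above together with the facts just established. This is exactly a morphism in $\mult(E,\cM)^{[1]}$. Since restricting multifunctors and whiskering multinatural transformations along $j_1$ and $j_2$ (and reading off $F(\iota)$, $\alpha_1$, $\alpha_2$) is strictly functorial, the assignment and its inverse are mutually inverse functors, which gives the claimed isomorphism of categories. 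Conceptually $I$ is the ``walking morphism of modules'', playing the role of a $\cat$-tensor $[1]\odot E$, and the lemma is just the corresponding hom-tensor adjunction evaluated at $\cM$; but it seems cleanest to carry the verification out directly.

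The one place needing genuine care is the bookkeeping for multinatural transformations: one must confirm that ``based'' forces the basepoint component to be the identity, and --- more to the point --- that the multinaturality constraints indexed by $n$-arrows with $n\ge 2$ and by the composite $1$-arrows $\iota\circ\mu^{(1)}_n$ into $2$ impose nothing beyond the $1$-arrow constraints. This reduces to a single structural observation about $I$: every one of its multimorphism sets is empty, a singleton forced by composition out of the two copies of $E$ and $\iota$, or already a multimorphism set of one of those copies. I would state this once, after which everything is formal.
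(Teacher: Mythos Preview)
Your proposal is correct and follows essentially the same route as the paper: both arguments identify a based multifunctor $I\to\cM$ with the data of two modules together with a module-preserving $1$-arrow, and then note that this is precisely a multinatural transformation between multifunctors $E\to\cM$, with a parallel check at the level of arrows. The paper's version is much terser---it asserts the object-level identification in two sentences and dismisses the morphism-level check as ``similar''---whereas you spell out the key structural observation (every multimorphism of $I$ is forced by composition from the two copies of $E$ and $\iota$) and use it systematically, which is exactly the right thing to do.
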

\begin{proof}
An object of $\mult(I, \cM )$ is a multifunctor that parameterizes a module map between modules. An object of $\mult(E,\cM)^{[1]}$ is a multi-natural transformation between two multifunctors $E\rightarrow\cM$ which each pick out a module object in $\cM$. The data of such a transformation is a 1-arrow between the two modules in $\cM$.  The multinaturality condition says precisely that the 1-arrow preserves the module structure. Thus the two categories have the same objects.  A similar check shows that the categories have the same arrows in a compatible fashion, giving the claimed isomorphism.
\end{proof}

Before we prove the existence of path factorizations we recall Quillen's Theorem A.

\begin{thm}[Quillen]
If $F:\cC\rightarrow \cD$ is a functor such that for each object $d\in\cD$ the simplicial set $N(d/F)$ is contractible, then F induces a homotopy equivalence $NF:N\cC\rightarrow N\cD$.
\end{thm}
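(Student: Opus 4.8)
The plan is to reproduce Quillen's bisimplicial-set argument. The one external input I would use is the standard lemma that a map of bisimplicial sets which is a weak homotopy equivalence in each row (equivalently, in each column) induces a weak homotopy equivalence on diagonals; this follows from the fact that realization of simplicial spaces preserves levelwise weak equivalences, every bisimplicial set being Reedy cofibrant.

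I would then construct the bisimplicial set $T$ with $T_{p,q}$ the set of triples consisting of a $p$-simplex $d_0\to\cdots\to d_p$ of $N\cD$, a $q$-simplex $c_0\to\cdots\to c_q$ of $N\cC$, and a morphism $\gamma\colon d_p\to F(c_0)$ of $\cD$, with faces and degeneracies the evident ones (so that, for instance, the last $p$-face precomposes $\gamma$ with $d_{p-1}\to d_p$ and the zeroth $q$-face postcomposes with $F(c_0)\to F(c_1)$). There are two projections: $\pi_{\cD}\colon T\to N\cD$ remembering only $(d_0\to\cdots\to d_p)$ (constant in $q$), and $\pi_{\cC}\colon T\to N\cC$ remembering only $(c_0\to\cdots\to c_q)$ (constant in $p$). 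Fixing $p$, the simplicial set $T_{p,\bullet}$ is $\coprod_{d_0\to\cdots\to d_p}N(d_p/F)$, a disjoint union of contractible simplicial sets by hypothesis, so $\pi_{\cD}$ is a weak equivalence in each row and $dT\to N\cD$ is a weak equivalence. Fixing $q$, the simplicial set $T_{\bullet,q}$ is $\coprod_{c_0\to\cdots\to c_q}N(\cD/F(c_0))$, and each slice $\cD/F(c_0)$ has a terminal object $1_{F(c_0)}$, hence contractible nerve, so $\pi_{\cC}$ is a weak equivalence in each column and $dT\to N\cC$ is a weak equivalence.

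It remains to check that the two projections are compatible with $NF$, i.e. $NF\circ\pi_{\cC}\homotopic\pi_{\cD}$ as maps $dT\to N\cD$; granted this, $NF$ is identified in the homotopy category with the zig-zag $N\cC\xleftarrow{\sim}dT\xrightarrow{\sim}N\cD$ and is therefore a weak homotopy equivalence. For the homotopy: on an $n$-simplex $(d_0\to\cdots\to d_n,\; c_0\to\cdots\to c_n,\; \gamma\colon d_n\to F(c_0))$ of $dT$, concatenation produces the $(2n+1)$-simplex $d_0\to\cdots\to d_n\xrightarrow{\gamma}F(c_0)\to\cdots\to F(c_n)$ of $N\cD$; the two monotone maps $[n]\to[2n+1]$, $i\mapsto i$ and $i\mapsto n+1+i$, pull this simplex back to $\pi_{\cD}$ and $NF\pi_{\cC}$ respectively, and the pointwise inequality $i\le n+1+i$ defines a natural transformation between these two maps, hence a factorization through $[n]\times[1]$; these assemble into a simplicial homotopy $dT\times\Delta^1\to N\cD$.

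The main obstacle is this last step: verifying that the concatenation homotopy is genuinely natural in the simplex (so that it descends to $dT\times\Delta^1$), together with the orientation bookkeeping in the definition of $T$ that makes the comma categories $d/F$ --- rather than $F/d$ --- the ones appearing, which is exactly what fixes the direction of $\gamma$. If the homotopy bookkeeping proved unpleasant, I would fall back on the alternative reduction: replace $F$ by the Grothendieck opfibration $F\downarrow\cD\to\cD$, to which $\cC$ maps by a functor admitting an adjoint (hence a nerve weak equivalence), dualize so that the fibers become the categories $d/F$, and then prove the statement directly for opfibrations with contractible fibers (where one may identify the nerve of the total category with the homotopy colimit over $\cD$ of the fibers).
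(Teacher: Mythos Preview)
Your argument is correct; it is essentially Quillen's original bisimplicial-set proof from \emph{Higher Algebraic K-theory I}. However, there is nothing to compare against here: the paper does not prove this statement. It merely recalls Quillen's Theorem A as a classical result, without proof, and then invokes it in the subsequent proposition to show that the map $\nmult(E,\cM)\to\nmult(I,\cM)$ is a weak equivalence. So your proposal supplies a proof where the paper supplies only a citation.

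For what it is worth, the bookkeeping you flag as the main obstacle is fine: the concatenation homotopy is natural in the simplex because the face and degeneracy operators of $dT$ act on $(d_\bullet,c_\bullet,\gamma)$ by acting simultaneously on the $d$-chain and the $c$-chain (and replacing $\gamma$ by the appropriate composite), and concatenation visibly respects this. The orientation of $\gamma$ as $d_p\to F(c_0)$ is exactly what makes the row-fibers the comma categories $d/F$ of the hypothesis, as you note. Your fallback via the Grothendieck construction is also a standard and perfectly acceptable route.
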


\begin{prop}
For a stably fibrant E-module $\cM$ the sequence $E\vee E \hookrightarrow I\rightarrow E$ induces a factorization
\[ \cM\xrightarrow{\simeq} \cM^I \twoheadrightarrow \cM\times \cM \]
into a strict weak equivalence followed by a stable fibration.
\end{prop}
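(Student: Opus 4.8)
The plan is to show that $NJ$ carries the sequence $E\vee E\hookrightarrow I\to E$ to the standard simplicial path-object factorization of the diagonal of $NJ(\cM)$; this path object exists because, by Schwede's theorem, the stable Q-model structure on $\Gamma\textit{-sSet}$ is a simplicial model category.

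The first step is to identify $NJ(\cM^I)$. First note that $\cM^I$ is again an E-module: since $E\wedge A\cong A\wedge E$ one has $(\cM^A)^E\cong(\cM^E)^A\cong\cM^A$, so $Mod_E$ is closed under cotensoring by objects of $\mult$ and $J(\cM^I)_m=\mult(E^m,\cM^I)$. By the closed structure of $\mult$ this equals $\mult(E^m\wedge I,\cM)\cong\mult(I,\cM^{E^m})$, and Lemma \ref{arrow} applied to the based multicategory $\cM^{E^m}$ identifies this with $\mult(E,\cM^{E^m})^{[1]}\cong\mult(E\wedge E^m,\cM)^{[1]}$. Since each $E^m$ is an E-module (\cite[Thm 5.12]{EM}), Theorem \ref{Shulman} gives $E\wedge E^m\cong E^m$, and hence $J(\cM^I)_m\cong\mult(E^m,\cM)^{[1]}=J(\cM)_m^{[1]}$, the arrow category; this is natural in $m\in\Gamma$ because $Mod_E\hookrightarrow\mult$ is full, so $E^\bullet$ factors through $Mod_E$ and its transition maps commute with the canonical module structure maps. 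Thus $J(\cM^I)\cong J(\cM)^{[1]}$ as $\Gamma$-categories, and passing to nerves — using $N(\cC^{[1]})\cong(N\cC)^{\Delta^1}$, valid since $\Delta^k\times\Delta^1\cong N([k]\times[1])$ and $N$ is fully faithful — we obtain $NJ(\cM^I)\cong NJ(\cM)^{\Delta^1}$, the cotensor of $NJ(\cM)$ by $\Delta^1$ in $\Gamma\textit{-sSet}$. Similarly $\cM^{E\vee E}\cong\cM^E\times\cM^E\cong\cM\times\cM$ (as $\wedge$ preserves the coproduct $E\vee E$), and since $NJ$ preserves products $NJ(\cM\times\cM)\cong NJ(\cM)\times NJ(\cM)=NJ(\cM)^{\partial\Delta^1}$.

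The second step is to check that, under these identifications, the map $\cM\to\cM^I$ induced by $I\to E$ becomes $NJ(\cM)=NJ(\cM)^{\Delta^0}\to NJ(\cM)^{\Delta^1}$ (induced by $\Delta^1\to\Delta^0$), while the map $\cM^I\to\cM\times\cM$ induced by $E\vee E\hookrightarrow I$ becomes $NJ(\cM)^{\Delta^1}\xrightarrow{(\mathrm{ev}_0,\mathrm{ev}_1)}NJ(\cM)^{\partial\Delta^1}$ (induced by $\partial\Delta^1\hookrightarrow\Delta^1$). This is read off from the proof of Lemma \ref{arrow}: the source and target of the module map classified by a multifunctor $I\to\cN$ are its restrictions along the two copies of $E$ on $\{0,1\}$ and $\{0,2\}$ inside $I$, which are exactly the two legs of $E\vee E\hookrightarrow I$; and $I\to E$ collapses the unique nontrivial $1$-arrow to an identity, so precomposition with it is the constant-arrow functor. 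In particular the composite $\cM\to\cM\times\cM$ is the diagonal, so the sequence genuinely gives a path-object factorization.

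Finally, $\cM$ being stably fibrant means $NJ(\cM)$ is stably Q-fibrant (by construction of the transferred model structure the fibrant E-modules are exactly those with stably Q-fibrant $K$-theory). The pushout-product (SM7) axiom for the simplicial model category $\Gamma\textit{-sSet}$, applied to the cofibration $\partial\Delta^1\hookrightarrow\Delta^1$ and the fibrant object $NJ(\cM)$, shows that $NJ(\cM)^{\Delta^1}\to NJ(\cM)^{\partial\Delta^1}$ is a stable Q-fibration, so $\cM^I\twoheadrightarrow\cM\times\cM$ is a stable fibration; working instead in the (also simplicial) strict Q-model structure, the trivial cofibration $\{0\}\hookrightarrow\Delta^1$ together with the fibrancy of $NJ(\cM)$ makes $\mathrm{ev}_0\colon NJ(\cM)^{\Delta^1}\to NJ(\cM)$ a strict Q-trivial fibration, whence its section $NJ(\cM)\to NJ(\cM)^{\Delta^1}$ is a strict Q-weak equivalence and $\cM\xrightarrow{\simeq}\cM^I$ is a strict weak equivalence. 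The one genuinely delicate point is assembling the natural isomorphism $J(\cM^I)\cong J(\cM)^{[1]}$ out of Lemma \ref{arrow}, the closed monoidal structure of $\mult$, and the E-module property of the $E^m$, and matching the two induced maps with the degeneracy and the boundary inclusion of $\Delta^1$; once that bookkeeping is done the rest is the standard simplicial path-object argument.
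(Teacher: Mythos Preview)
Your proof is correct and reaches the same conclusion as the paper, but the routes differ in emphasis. Both arguments hinge on the identification $NJ(\cM^I)\cong NJ(\cM)^{\Delta^1}$ obtained from Lemma~\ref{arrow} and the E-module structure of the $E^m$; you make this identification once at the outset and then run the entire argument through the simplicial model category axiom (SM7), treating the two maps uniformly as induced by $\Delta^1\to\Delta^0$ and $\partial\Delta^1\hookrightarrow\Delta^1$. The paper instead treats the two halves asymmetrically: for the first map it avoids the full identification and argues directly via Quillen's Theorem~A, showing that each slice $A/0^*$ has an initial object (the identity module map $1_A$); for the second map it invokes the localization principle that strict fibrations between stably fibrant objects are stable fibrations, and so checks by hand that $NJ(\cM)^{\Delta^1}$ is levelwise Kan, satisfies the Segal condition, and has grouplike $\pi_0$.

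Your approach is cleaner and more packaged, since once the bookkeeping of the natural isomorphism $NJ(\cM^I)\cong NJ(\cM)^{\Delta^1}$ is done everything follows from general simplicial model category theory; the paper's Theorem~A argument for the first map is slightly more elementary in that it does not require matching the induced map with the degeneracy $\Delta^1\to\Delta^0$, only knowing the target is $\mult(I,\cM^{E^n})$. Either way the substantive content is the same.
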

\begin{proof}
To show that the first map is a strict weak equivalence we must show that for each $n$ \[ \nmult(E^n, \cM)\rightarrow \nmult(E^n, \cM^I)\] is a weak equivalence. By adjunction this map has the form \[ \nmult(E, \cM^{E^n})\rightarrow \nmult(I, \cM^{E^n})\] and so it suffices to show that for any E module $\cM$  \[ \nmult(E, \cM)\xrightarrow{0^*} \nmult(I, \cM)\] is a strict weak equivalence. Let $A:E\rightarrow \cM$ be any object of the left hand side, representing a module object $A\in \cM$, then the multifunctor $1_A:I\rightarrow \cM$ choosing the identity at $A$ is an initial object in the slice category $A/0^*$.  This implies that the slice is contractible, and so by Quillen's Theorem A the map $0^*$ is a weak equivalence.

For the second map, recall that because the stable Q-model structure is defined by a localization, every strict fibration between stably fibrant objects is a stable fibration, so it suffices to show that the map is a strict fibration and that the domain and codomain are stably fibrant. Recall that this means we need to show that they are levelwise Kan complexes, that they satisfy the Segal condition, and that their $\pi_0$ is a group. By Lemma \ref{arrow}, \[\nmult(E^*, \cM^I)\cong \nmult(E^*, \cM)^{\Delta[1]}\]  This object is clearly levelwise Kan, and  satisfies \[\pi_0\nmult(E, \cM)^{[1]}\cong \pi_0\nmult(E, \cM)\]  The Segal condition comes from cotensoring the assumed weak equivalences \[ \nmult(E^n, \cM)\xrightarrow{\simeq} \nmult(E, \cM)\times\cdots\times \nmult(E,\cM) \] by $\Delta[1]$, which remains a weak equivalence because the left and right hand side are Kan complexes, and so this object is stably fibrant. Finally the object $\nmult(E^*, \cM)\times \nmult(E^*, \cM)$ is stably fibrant because this property is closed under products.
\end{proof}

Now we can conclude the existence of the model structure by using Quillen's path object argument, which we provide a proof of for convenience.

\begin{prop}\label{path}
Every multifunctor with the left lifting property against all stable fibrations is a weak equivalence.
\end{prop}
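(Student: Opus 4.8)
The plan is to recognize the statement as an instance of Quillen's path object argument and to supply its two inputs, so that condition (2) of the Kan Transfer Theorem holds and Theorem \ref{modelstructure} follows. Write $F$ for the left adjoint of $NJ\colon Mod_E\to\Gamma\textit{-sSet}$ — the functor $X\mapsto X\tensor_\Gamma E^*$ constructed above — and fix a set $J$ of generating acyclic cofibrations for the stable Q-model structure (which exists by Schwede's theorem). By adjunction, together with the fact that in a cofibrantly generated model category the fibrations are exactly the maps with the right lifting property against the generating acyclic cofibrations, a multifunctor is a stable fibration precisely when it has the right lifting property against $F(J)$. Hence a multifunctor with the left lifting property against all stable fibrations has the left lifting property against every map with the right lifting property against $F(J)$; by the small object argument applied to $F(J)$ — legitimate since $NJ$ preserves filtered colimits, so the domains of $F(J)$ are small relative to $F(J)$-cell complexes — and the usual retract argument, any such multifunctor is a retract of a transfinite composite of pushouts of maps in $F(J)$. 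As $NJ$ preserves retracts and weak equivalences are closed under retracts, it suffices to show that $NJ$ carries every transfinite composite of pushouts of maps in $F(J)$ to a stable Q-equivalence, which is precisely condition (2) of the Kan Transfer Theorem.

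Next I would verify the two hypotheses of Quillen's theorem above. A functorial fibrant replacement on $Mod_E$ comes from the small object argument applied to $F(J)$: for each $E$-module $\cM$ it produces a functorial factorization $\cM\to R\cM\to *$ of the terminal map in which $\cM\to R\cM$ is a transfinite composite of pushouts of maps in $F(J)$ and $R\cM\to *$ has the right lifting property against $F(J)$, so $R\cM$ is stably fibrant. The required functorial path objects on stably fibrant objects are exactly the content of the preceding proposition, which factors the diagonal of a stably fibrant $\cM$ functorially as $\cM\xrightarrow{\simeq}\cM^I\twoheadrightarrow\cM\times\cM$ into a strict weak equivalence followed by a stable fibration. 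Quillen's theorem then delivers condition (2), hence the proposition.

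Since the excerpt promises a proof of Quillen's theorem for convenience, I would also spell out its mechanism. Using the functorial fibrant replacement and the two-out-of-three property, one reduces to showing that a map $f\colon\cM\to\cN$ between stably fibrant $E$-modules that has the left lifting property against all stable fibrations is sent by $NJ$ to a weak equivalence. For such an $f$, lifting $\mathrm{id}_\cM$ against the stable fibration $\cN\to *$ in the square with $f$ on the left yields a retraction $r\colon\cN\to\cM$ with $rf=\mathrm{id}_\cM$; and, because $frf=f$, the square with $f$ on the left, the path fibration $\cN^I\twoheadrightarrow\cN\times\cN$ on the right, $w\circ f$ on top (where $w\colon\cN\xrightarrow{\simeq}\cN^I$ is the structural equivalence) and $(\mathrm{id}_\cN,fr)$ on the bottom commutes, so a lift exhibits a right homotopy $fr\simeq\mathrm{id}_\cN$ relative to the path object $\cN^I$. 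Applying $NJ$ and using that $NJ(\cN^I)$ is a path object for the stably fibrant $\Gamma$-space $NJ(\cN)$, one obtains $NJ(r)\circ NJ(f)=\mathrm{id}$ and $NJ(f)\circ NJ(r)$ homotopic to the identity, so $NJ(f)$ becomes an isomorphism in $\ho(\Gamma\textit{-sSet})$ and is therefore a stable Q-equivalence.

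I expect the main obstacle to be the reduction in the last paragraph to the case of stably fibrant source and target (some presentations reduce only to a stably fibrant target): because not every $E$-module is stably fibrant, the fibrant-replacement and pushout manipulations feeding the two-out-of-three arguments must be arranged with some care. This is the genuinely delicate — though by now standard — core of the path object argument; everything else is routine bookkeeping with the adjunction $F\dashv NJ$ and with the path objects already constructed.
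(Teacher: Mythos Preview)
Your proposal is correct and follows the same overall strategy as the paper --- Quillen's path object argument fed by the small object argument (for fibrant replacement) and the preceding proposition (for path objects) --- but the mechanism you spell out in your final paragraph differs from the paper's. You produce a retraction $r$ with $rf=\mathrm{id}$ by lifting against $\cN\to *$ and then a right homotopy $fr\simeq\mathrm{id}$ by lifting against the path fibration $\cN^I\twoheadrightarrow\cN\times\cN$, concluding that $NJ(f)$ is an isomorphism in $\ho(\Gamma\textit{-sSet})$. The paper instead works directly with arbitrary $f\colon X\to Y$: it forms the mapping path space $Pf$ as the pullback of $\tilde{Y}^I\twoheadrightarrow\tilde{Y}\times\tilde{Y}$ along $(\tilde f\times 1)\colon\tilde{X}\times\tilde{Y}\to\tilde{Y}\times\tilde{Y}$, observes that the projection $Pf\to\tilde{X}$ is a weak equivalence (being the pullback of the trivial fibration $\tilde{Y}^I\to\tilde{Y}$), lifts $f$ against the fibration $Pf\to\tilde{Y}$, and finishes with the two-of-six property applied to $X\xrightarrow{f}Y\to Pf\to\tilde{Y}$. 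Your version has the virtue of making the homotopy-inverse explicit and of isolating exactly where fibrancy of the target is used; the paper's version avoids ever mentioning $NJ$ in the endgame and packages everything into a single diagram plus two-of-six. Your closing caveat about the reduction to fibrant objects is well placed: in fact the cleaner route (which you allude to) reduces only to fibrant \emph{target}, since the small-object fibrant replacement $B\to RB$ itself has the left lifting property and hence falls under the case already handled, after which two-of-three finishes the general case.
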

\begin{proof}
   Let $f:X\rightarrow Y$ lift against stable fibrations and choose stably fibrant replacements  $\tilde{X}, \tilde{Y}$ in $Mod_E$. Consider the following diagram
   \begin{center}
   \begin{tikzcd}
    X\ar[dr, dotted, "i"]\ar[r, "f"]\ar[dd, "(1\times f)"'] & Y\ar[r, "\simeq"] & \tilde{Y}\ar[d, "\simeq"]\\
    & Pf\ar[dr, phantom, "\lrcorner", very near start]\ar[r]\ar[d, two heads] & \tilde{Y}^I\ar[d, two heads] \\
    X\times Y\ar[r, "\simeq"] & \tilde{X}\times \tilde{Y}\ar[r, "(f\times 1)"]\ar[d, two heads] & \tilde{Y}\times \tilde{Y}\ar[d, two heads] \\
    & \tilde{X}\ar[r, "\tilde{f}"'] & \tilde{Y}
   \end{tikzcd}
   \end{center}
   
   The map $\pi: Pf\rightarrow \tilde{X}\times\tilde{Y}\rightarrow \tilde{X}$ is a weak equivalence because it is the pullback of the trivial fibration $\tilde{Y}^I\twoheadrightarrow \tilde{Y}$. Because $\pi i: X\rightarrow \tilde{X}$ is a weak equivalence it follows that $i$ is as well.  Because the map $Pf\rightarrow \tilde{X}\times\tilde{Y}\rightarrow \tilde{Y}$ is a fibration there is a lift in the diagram
   
   \begin{center}
       \begin{tikzcd}
       X\ar[r, "\simeq"]\ar[d, "f"']  &   Pf\ar[d, two heads] \\
       Y\ar[r, "\simeq"']\ar[ur, dotted]    &   \tilde{Y}
       \end{tikzcd}
   \end{center}
   
   and so by the two of six property the map $f$ is a weak equivalence.
\end{proof}

The choice to restrict from $\mult$ to $Mod_E$ was made in order to use the path object argument, which requires a factorization of the diagonal $\M\rightarrow \M\times \M$. The issue with based multicategories which are not E-modules is that cotensoring with $E\vee E\hookrightarrow I\rightarrow E$ provides a factorization instead of the diagonal $\M^E\rightarrow \M^E\times \M^E$, which is different as Proposition \ref{modstruc1} does not apply in general.

Note however that the functor $J$ induces a levelwise isomorphism
\[J(\M^E)\xrightarrow{\cong} J(\M)\] because every multifunctor out of an E-module factors through $\M^E$, and so no $K$-theoretic information is being lost, in a very strict sense, upon this restriction.

If we insist on working with the category $\mult$, we can at the least lift the model structure on $Mod_E$ along the adjunction

\begin{center}
    \begin{tikzcd}
    \mult\ar[r,"(-)^E"',""{name=A, below}] & Mod_E\ar[l,hook, bend right, ""{name=B,above}] \ar[from=B, to=A, symbol=\dashv]
    \end{tikzcd}
\end{center}
to a \textit{semi}-model structure on $\mult$. 

\begin{defn}\label{semimodeldef}
A semi-model structure on a category $\cM$ is a choice of three classes of arrows: weak equivalences, fibrations, and cofibrations, which satisfy the usual model category axioms 
\begin{enumerate}
    \item[(M1)] $\cM$ is bicomplete.
    \item[(M2)] The weak equivalences are closed under the two of three property.
    \item[(M3)] The three classes of maps are closed under retracts.
\end{enumerate}
Except we replace axioms M4, M5 with the following weaker axioms
\begin{enumerate}
    \item[(M4')] 
        \begin{itemize}
            \item The fibrations have the right lifting property with respect to trivial cofibrations with cofibrant domain.
            \item The trivial fibrations have the right lifting property with respect to cofibrations with cofibrant domain.
        \end{itemize}
    \item[(M5')]
        \begin{itemize}
            \item Every morphism with cofibrant domain factors into a cofibration followed by a trivial fibration.
            \item Every morphism with cofibrant domain factors into a trivial cofibration followed by a fibration.
        \end{itemize}
    \item[(M6')] The initial object is cofibrant.
\end{enumerate}
\end{defn}

The idea is that once we restrict to just the cofibrant objects of a semi-model structure we are effectively working in a model category, with all of our familiar model categorical tools available to us.  For this reason a semi-model structure is often sufficient to do homotopy theory. For more information on the power of semi-model structures, see for example \cite{F}.

We will need the following variant of the Kan transfer theorem for semi-model structures from \cite[Thm 12.1.4]{F}.

\begin{thm}[Fresse]\label{fresse}
   Let $\cM$ be a cofibrantly generated model category with a  set of generating cofibrations $I$ and a set of generating trivial
cofibrations $J$, let $\cN$ be a complete and cocomplete category, and let
$$F\colon\cM\rightleftarrows\cN\colon U$$
be an adjunction. Furthermore let $F(\cN_c) = \{F(i): i\text{ is a cof in } \cN\}$. 

Suppose that the following conditions hold.
\begin{enumerate}
\item The right adjoint preserves filtered colimits.

\item Letting $\emptyset\in \cM$ denote the initial object, $UF(\emptyset)$ is cofibrant

\item For any $F(\cN_c)$-cell complex $A$ with $U(A)$ cofibrant, and any pushout

\begin{center}
    \begin{tikzcd}
    F(K)\ar[r]\ar[d, "F(i)"']    &   A\ar[d, dotted, "f"]   \\
    F(L)\ar[r, dotted]    &   B
    \end{tikzcd}
\end{center}
$U(f)$ is a (trivial) cofibration when $i$ is a (trivial) cofibration with cofibrant domain.

\end{enumerate}
Then, there is a cofibrantly generated semi-model category structure on $\cN$ in which
\begin{itemize}
\item the set $F(I)$ is
a set of generating cofibrations,
\item the set $F(J)$ is a set of generating acyclic cofibrations,
\item $U$ creates weak equivalences and fibrations and preserves cofibrations.
\end{itemize}
\end{thm}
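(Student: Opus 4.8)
The plan is to run the proof of the ordinary Kan Transfer Theorem~\ref{theoremtransferredmodelstructure}, but with every factorization and every lifting property restricted to maps with cofibrant domain, and with one extra invariant --- cofibrancy of the underlying $\cM$-object --- carried through the small object argument. First I would fix the classes: a map $w$ of $\cN$ is a weak equivalence (resp.\ fibration) exactly when $U(w)$ is one in $\cM$, and the cofibrations are the $F(I)$-cofibrations, i.e.\ the retracts of transfinite composites of pushouts of maps of $F(I)$. The adjunction isomorphism $\cN(F(i),p)\cong\cM(i,U(p))$ identifies the maps with the right lifting property against $F(I)$ (resp.\ $F(J)$) with the $p$ for which $U(p)$ is a trivial fibration (resp.\ fibration) of $\cM$; hence the fibrations and trivial fibrations of $\cN$ are detected by right lifting against $F(J)$ and $F(I)$, and the $F(I)$-cofibrations are exactly the maps with the left lifting property against all trivial fibrations. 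With the classes so described, (M1) is a hypothesis, (M2) is the two-out-of-three property of $\cM$ transported along $U$, and (M3) is automatic since each class is either an injectivity class or a retract-closed saturation.

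Next I would produce the factorizations. By hypothesis~(1), $U$ preserves filtered colimits, so the domains of $F(I)$ and $F(J)$ are small in $\cN$ (being $F$ of small objects of $\cM$), and as $\cN$ is cocomplete the small object argument applies to $F(I)$ and to $F(J)$. For $X\to Y$ with $X$ cofibrant this gives $X\to Z\to Y$ with $X\to Z$ a relative $F(I)$-cell complex (resp.\ relative $F(J)$-cell complex) and $Z\to Y$ a trivial fibration (resp.\ fibration). The first factorization supplies the cofibration/trivial-fibration half of~(M5'); the second supplies the trivial-cofibration/fibration half, provided I can show a relative $F(J)$-cell complex with cofibrant domain is a weak equivalence.

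That is the heart of the matter and the step I expect to be the main obstacle. I would argue by transfinite induction along the cellular filtration that (a) a relative $F(I)$-cell complex with cofibrant domain has every stage sent by $U$ to a cofibrant object of $\cM$, and (b) a relative $F(J)$-cell complex with cofibrant domain is sent by $U$ to a transfinite composite of trivial cofibrations of $\cM$, hence to a weak equivalence. The base case is hypothesis~(2), using that the left adjoint $F$ carries $\emptyset_\cM$ to $\emptyset_\cN$, so $U(\emptyset_\cN) = UF(\emptyset_\cM)$. At a successor stage $A_\alpha\to A_{\alpha+1}$ is a pushout of some $F(i)$ --- with $i$ a (trivial) cofibration of $\cM$ of cofibrant domain --- along a map out of a cell complex $A_\alpha$ with cofibrant $U$-image (by induction), and hypothesis~(3) says $U$ carries this pushout to a (trivial) cofibration, which propagates cofibrancy of the underlying object and, in the trivial case, is a weak equivalence. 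Limit stages use hypothesis~(1) to commute $U$ past the colimit together with closure of (trivial) cofibrations under transfinite composition in $\cM$. Granting (a) and (b): (M6') holds because $\emptyset_\cN = F(\emptyset_\cM)$ is the empty $F(I)$-cell complex with cofibrant $U$-image; (M5') is the previous paragraph together with (b); and (M4') follows from the lifting formalism of the first step.

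The genuinely delicate bookkeeping --- and where I would be most careful --- is keeping hypothesis~(3) applicable at every stage of the small object argument: each cell attachment must be a pushout of $F(i)$ with $i$ of cofibrant domain, along a map out of a cell complex with cofibrant $U$-image, and both properties must survive the pushouts and transfinite composites that the argument manufactures. This is why the generating (trivial) cofibrations of $\cM$ are taken with cofibrant domains. In the intended application one puts $\cM = Mod_E$ with the model structure of Theorem~\ref{modelstructure}, $\cN = \mult$, $F$ the inclusion $Mod_E\hookrightarrow\mult$ and $U = (-)^E$; then hypothesis~(1) holds because $E$ is compact, and hypotheses~(2) and~(3) reduce to statements about pushouts in $\mult$ of maps out of $E$-cellular objects that can be checked directly.
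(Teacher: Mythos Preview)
The paper does not prove this theorem: it is quoted from \cite[Thm 12.1.4]{F} and used as a black box, so there is no ``paper's own proof'' to compare against. Your sketch is, in outline, the standard argument Fresse gives, and the strategy---carry cofibrancy of $U(A_\alpha)$ through the cellular filtration so that hypothesis~(3) keeps applying, then deduce that relative $F(J)$-cell complexes with cofibrant domain are sent by $U$ to trivial cofibrations---is the right one.

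Two small points to tighten. First, your claim that the $F(I)$-cofibrations are \emph{exactly} the maps with the left lifting property against all trivial fibrations overreaches in the semi-model setting: one direction is fine, but the converse uses the retract argument, which needs a factorization of $f$, and (M5') only supplies that when the domain of $f$ is cofibrant. This does not damage the rest of your argument, since everywhere you use lifting you are already assuming cofibrant domain. Second, note that hypothesis~(3) only yields information when $i$ has cofibrant domain, so for your transfinite induction to go through the generating sets $I,J$ of $\cM$ must themselves consist of maps with cofibrant domain; you flag this at the end, but it is worth stating up front as an implicit standing assumption (it is present in Fresse's formulation and is satisfied in the paper's application since $\cM=Mod_E$ carries a transferred cofibrantly generated structure). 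With those caveats, your outline is sound.
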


\begin{thm}
   The adjunction
   \begin{center}
    \begin{tikzcd}
    \mult\ar[r,"(-)^E"',""{name=A, below}] & Mod_E\ar[l,hook, bend right, ""{name=B,above}] \ar[from=B, to=A, symbol=\dashv]
    \end{tikzcd}
    \end{center}
endows $\mult$ with the structure of a cofibrantly generated semi-model structure.
\end{thm}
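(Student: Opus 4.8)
The plan is to apply Fresse's semi-model transfer theorem, Theorem \ref{fresse}, to the adjunction in the statement, taking $\cM = Mod_E$ with the genuine model structure of Theorem \ref{modelstructure}, $\cN = \mult$, $F$ the fully faithful inclusion $Mod_E \hookrightarrow \mult$ (the \emph{left} adjoint of the pair), and $U = (-)^E$ (the right adjoint). Recall that $\mult$ is bicomplete, so the standing hypothesis on $\cN$ is in place. Two structural facts will be used throughout: $F$, being a left adjoint, preserves all colimits, and (as in the proof that $Mod_E$ is bicomplete) the inclusion $Mod_E \hookrightarrow \mult$ creates all colimits; moreover, since $F$ is fully faithful and $\cM^E \cong \cM$ for every E-module $\cM$ by Proposition \ref{modstruc1}, the composite $UF$ is naturally isomorphic to the identity functor of $Mod_E$.

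I would then verify the three hypotheses of Theorem \ref{fresse}. For (1), that $U = (-)^E$ preserves filtered colimits: since the inclusion creates filtered colimits, it is enough that the endofunctor $(-)^E$ of $\mult$ preserve them, which holds because $E$ is a finite multicategory --- finitely many objects, and each multi-hom set a singleton or empty --- so the data of a based multifunctor $E \to \cM$, or of a based multinatural transformation between two such, is a finite limit built from the objects, arrows, and multi-arrows of $\cM$, and finite limits of sets commute with filtered colimits; this is exactly the observation behind the claim that $J$ preserves filtered colimits. For (2): the initial object $\emptyset$ of $Mod_E$ has $UF(\emptyset) \cong \emptyset$, which is cofibrant because the initial object of any model category is.

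Hypothesis (3) is where the weight of such a transfer normally lies, but I expect it to collapse here by bireflectivity, and this collapse is the one point worth spelling out. Any cell complex $A$ assembled in $\mult$ from transfinite composites of pushouts of maps $F(i)$ with $i$ a cofibration of $Mod_E$ satisfies $A \cong F(A')$ for the analogous cell complex $A'$ in $Mod_E$ (since $F$ preserves those colimits and $Mod_E$ is closed under them), so $U(A) \cong UF(A') \cong A'$ is automatically cofibrant and the side condition is vacuous; and given a pushout in $\mult$ of $F(i)\colon F(K) \to F(L)$ along some $F(K) \to A \cong F(A')$, with $i$ a (trivial) cofibration of $Mod_E$ with cofibrant domain, fullness of $F$ and closure of $Mod_E$ under pushouts identify the pushout with $F(B')$ and the induced map $f\colon A \to B$ with $F(f')$, where $f'\colon A' \to B'$ is the cobase change of $i$ taken in $Mod_E$; hence $U(f) \cong UF(f') \cong f'$ is a (trivial) cofibration, since cobase changes of (trivial) cofibrations are such. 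Theorem \ref{fresse} then yields a cofibrantly generated semi-model structure on $\mult$ in which $(-)^E$ creates the weak equivalences and fibrations and preserves cofibrations; and since, as already noted, $J$ --- hence $NJ$ --- carries $\cM^E$ isomorphically onto $\cM$, these weak equivalences are precisely the based multifunctors that $NJ$ sends to stable $Q$-equivalences. The only genuinely non-formal input is the finiteness of $E$ used in (1); everything else is a formal consequence of $Mod_E$ being a bireflective subcategory on which $(-)^E$ restricts to the identity, and I would expect the write-up to be correspondingly short.
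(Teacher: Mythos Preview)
Your proposal is correct and follows the same overall strategy as the paper: verify the three hypotheses of Fresse's transfer theorem (Theorem~\ref{fresse}) for the adjunction $Mod_E \hookrightarrow \mult$ with right adjoint $(-)^E$. The only substantive difference lies in how condition~(3) is handled. The paper first cites \cite[12.1.3]{F} to see that the cell complex $A$ is cofibrant, observes that for any based multicategory $\cM$ the map $\cM^E \to \cM$ is sent by $(-)^E$ to an isomorphism and is therefore a trivial fibration, lifts $\emptyset \to A$ against $A^E \to A$ to obtain a section, and deduces that $A$ is a retract of the E-module $A^E$ and hence itself an E-module. You instead argue directly from bireflectivity: since the inclusion $Mod_E \hookrightarrow \mult$ is a fully faithful left adjoint that creates colimits, a cell complex built from pushouts and transfinite composites of maps $F(i)$ starting at the initial object never leaves the image of $F$, so $A \cong F(A')$ on the nose. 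Your route is a bit more elementary in that it avoids the lifting argument and the appeal to \cite[12.1.3]{F}; the paper's route, on the other hand, makes explicit the role of the trivial fibration $\cM^E \to \cM$, which is a useful observation in its own right. Both arrive at the same endpoint: the relevant pushout lives in $Mod_E$, where the genuine model structure guarantees that $f$ (hence $U(f)$) is a (trivial) cofibration.
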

\begin{proof}
The right adjoint $(-)^E$ preserves filtered colimits, and the initial object is clearly cofibrant as it is (trivially) in $Mod_E$.  Therefore we need only check condition (3) of \ref{fresse}, which in our case reads as follows: given any (trivial) cofibration of E-modules with cofibrant domain, $i:K\rightarrow L$, and any cell complex $A$, the morphism $f$ in the pushout
\begin{center}
    \begin{tikzcd}
    K\ar[r]\ar[d, "i"']    &   \cA\ar[d, dotted, "f"]   \\
    L\ar[r, dotted]    &   \cB
    \end{tikzcd}
\end{center}
is a (trivial) cofibration. Now by \cite[12.1.3]{F} $\cA$ is necessarily cofibrant, however note that for any based multicategory  $\cM$, the inclusion $\cM^E\hookrightarrow \cM$ is taken by $(-)^E$ to an isomorphism and so is a trivial fibration. So since $\emptyset\rightarrow \cA$ is a cofibration with cofibrant domain, there is a lift 

\begin{center}
    \begin{tikzcd}
        &   \cA^E\ar[d]   \\
    \cA\ar[r, equals]\ar[ur, dotted]    &  \cA
    \end{tikzcd}
\end{center}
which implies an isomorphism $\cA^E\cong \cA$.  Hence $\cA$ is necessarily an E-module. Thus the above pushout takes place entirely in $Mod_E$ (which in particular is closed under pushouts), and so $f$ is a (trivial) cofibration because $Mod_E$ is a model category.
\end{proof}

\begin{rem}
It may be that the above semi-model structure is a fully fledged model structure, but the proof of this would require more careful analysis since we cannot make use of our particular path object factorization for $Mod_E$. 
\end{rem}

Now we focus on recapturing Thomason's theorem in our setting. Thomason deals specifically with the equivalence of homotopy categories given by Segal's functor $NK\colon SymMonCat\rightarrow \Gamma\textit{-Cat}\rightarrow \Gamma\textit{-sSet}$.  Since we have changed our consideration to $\mult$ and again to $Mod_E$ we will need the following proposition relating these three categories. In each case, by the stable homotopy category we refer to the homotopy category of the relative category obtained by equipping each category with stable weak equivalences created by the suitable $K$-theory functor.
\begin{prop}\label{euivstable}
The stable homotopy categories of the following three categories are equivalent
\begin{itemize}
    \item The category $SymMonCat$ of symmetric monoidal categories and strict monoidal functors.
    \item The category $\mult$ of based multicategories and based multifunctors.
    \item The ccategory $Mod_E$ of E-modules and based multifunctors.
\end{itemize}
\end{prop}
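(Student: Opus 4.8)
The plan is to realize each of the two comparisons that make up the proposition, $\ho(\mult)\simeq\ho(Mod_E)$ and $\ho(SymMonCat)\simeq\ho(\mult)$, as the effect on homotopy categories of a homotopical adjunction whose unit and counit are natural weak equivalences. Such an adjunction descends to an adjoint equivalence of homotopy categories: both adjoints preserve weak equivalences, hence pass to the localizations, and there the unit and counit become natural isomorphisms. I would carry out the (clean) comparison with $Mod_E$ first, then the comparison with $SymMonCat$, where the real content sits.

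For $\ho(\mult)\simeq\ho(Mod_E)$ I would use the adjunction $i\dashv(-)^E$ of Lemma \ref{fullfaith}, with $i\colon Mod_E\hookrightarrow\mult$ the fully faithful inclusion. The inclusion $i$ preserves and reflects weak equivalences by definition, since those on $Mod_E$ are created by $NJ\circ i$. For the right adjoint, recall the observation made just after Proposition \ref{path}: since every multifunctor out of an $E$-module (in particular out of $E^m$) factors uniquely through $\M^E$, the counit inclusion $\M^E\hookrightarrow\M$ induces a natural levelwise isomorphism $J(\M^E)\xrightarrow{\cong}J(\M)$. Hence $NJ\circ i\circ(-)^E\cong NJ$; so $(-)^E$ also preserves and reflects weak equivalences, and the counit of $i\dashv(-)^E$ is a natural weak equivalence — indeed $NJ$ carries it to a levelwise isomorphism. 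The unit $\mathrm{id}_{Mod_E}\Rightarrow(-)^E\circ i$ is a natural isomorphism by Proposition \ref{modstruc1}. Thus $\ho(i)\dashv\ho((-)^E)$ is an adjoint equivalence.

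For $\ho(SymMonCat)\simeq\ho(\mult)$, let $\Phi\colon SymMonCat\to\mult$ send a symmetric monoidal category to its underlying multicategory, based at the canonical commutative monoid structure on the unit object; this is functorial for strict monoidal functors. The comparison of $K$-theory constructions of \cite{EM} (building on \cite{EM2}) shows that $NK$ and $NJ\circ\Phi$ agree up to a natural stable equivalence, so $\Phi$ preserves and reflects stable weak equivalences. To build a homotopy inverse I would compose $J\colon\mult\to\Gamma\textit{-Cat}$ with the inverse $K$-theory functor of Thomason \cite{Thom}, as refined by Mandell \cite{M}, producing a functor $\Theta\colon\mult\to SymMonCat$ with values in permutative categories. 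Mandell's theorem supplies natural stable equivalences exhibiting inverse $K$-theory as a homotopy inverse of $K$-theory on $\Gamma$-categories; transported along the identification $NK\simeq NJ\circ\Phi$, these should assemble into zig-zags of natural transformations connecting $\Phi\Theta$ with $\mathrm{id}_{\mult}$ and $\Theta\Phi$ with $\mathrm{id}_{SymMonCat}$ that $NJ$, respectively $NK$, takes to stable equivalences. Because $NJ$ and $NK$ create the respective weak equivalences, these are zig-zags of natural weak equivalences, and $\ho(\Theta)$ is a quasi-inverse to $\ho(\Phi)$.

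The main obstacle is exactly this last step. Agreement of $K$-theory spectra is a priori weaker than being joined by a zig-zag of weak equivalences inside $\mult$ or $SymMonCat$, so one must genuinely lift the naturality data of inverse $K$-theory from $\Gamma$-categories to honest multifunctors and monoidal functors (or zig-zags thereof). An alternative that sidesteps this is to defer the argument until the Quillen equivalence of Theorem \ref{main2} is in hand: it yields $\ho(Mod_E)\simeq\ho(\Gamma\textit{-sSet})\simeq ConnSpectra$, which together with the first comparison above and Thomason's Theorem \ref{thomthm} forces $\ho(\mult)\simeq ConnSpectra\simeq\ho(SymMonCat)$ at once — at the price of a forward reference.
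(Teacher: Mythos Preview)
Your treatment of $\ho(\mult)\simeq\ho(Mod_E)$ is exactly the paper's: use the adjunction $i\dashv(-)^E$, invoke the levelwise isomorphism $J(\M^E)\xrightarrow{\cong}J(\M)$ for the counit, and Proposition \ref{modstruc1} for the unit. Nothing to add there.

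For $\ho(SymMonCat)\simeq\ho(\mult)$, however, you have correctly identified that your main route has a genuine gap --- lifting the inverse $K$-theory natural equivalences from $\Gamma$-categories back into $\mult$ and $SymMonCat$ is exactly the nontrivial content, and you do not carry it out. Worse, your proposed alternative of deferring to Theorem \ref{main2} is circular: the proof of Theorem \ref{main2} \emph{uses} this proposition to conclude that $\ho((i(-))^E)$ is an equivalence, so you cannot invoke it here.

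The paper avoids both problems by a different and much shorter move: rather than building a homotopy inverse to $\Phi$ by hand via inverse $K$-theory, it simply appeals to the comonadic adjunction $\mult\rightleftarrows SymMonCat$ constructed in \cite[\S4]{EM} (the right adjoint freely completes a based multicategory to a permutative category) together with the analysis in \cite[\S3]{M}, which already establishes that this adjunction induces an equivalence of stable homotopy categories. So the comparison is a one-line citation, not a construction. Your approach, if the lifting were carried out, would amount to reproving part of \cite{M}; the paper's approach is to quote it.
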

\begin{proof}
The equivalence of the first two follows from the discussion of Section 3 in \cite{M} using the comonadic adjunction
\begin{center}
    \begin{tikzcd}
    \mult\ar[r, bend right,""{name=A, below}] & SymMonCat\ar[l, bend right, ""{name=B,above}] \ar[from=A, to=B, symbol=\dashv]
    \end{tikzcd}
\end{center}
described in Section 4 of \cite{EM}. The equivalence of the homotopy categories of $\mult$ and $Mod_E$ follows from the fact that $J(\cM^E)\rightarrow J(\cM)$ is a levelwise isomorphism, so the adjunction 
\begin{center}
    \begin{tikzcd}
    \mult\ar[r,bend right,"(-)^E"',""{name=A, below}] & Mod_E\ar[l,hook, bend right, ""{name=B,above}] \ar[from=B, to=A, symbol=\dashv]
    \end{tikzcd}
    \end{center}
gives the desired equivalence at the level of homotopy categories.
\end{proof}

We now obtain as our second main result the desired upgrade of Thomason's theorem.
\begin{thm}\label{main2}
   The right Quillen functor $NJ: Mod_E\rightarrow \Gamma\textit{-sSet}$ is a Quillen equivalence.
\end{thm}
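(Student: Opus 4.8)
The plan is to deduce the theorem from the comparison of homotopy categories in Proposition \ref{euivstable} together with Thomason's Theorem \ref{thomthm}, rather than by a hands-on analysis of the derived unit. Recall that a Quillen adjunction is a Quillen equivalence exactly when the induced adjunction $\mathbb{L}L\dashv \mathbb{R}(NJ)$ on homotopy categories is an adjoint equivalence, and that for this it suffices to know that the total right derived functor $\mathbb{R}(NJ)$ is an equivalence of categories: its left adjoint is then forced to be a quasi-inverse, so the derived unit and counit are automatically isomorphisms.

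\textbf{Identifying the derived functor.} First I would observe that $NJ$ sends \emph{every} weak equivalence of $Mod_E$ to a stable Q-equivalence, since by Theorem \ref{modelstructure} and the Kan transfer theorem \ref{theoremtransferredmodelstructure} the weak equivalences of $Mod_E$ are by definition exactly the maps that $NJ$ carries to weak equivalences. Consequently no fibrant replacement is needed to compute the total right derived functor, and $\mathbb{R}(NJ)$ coincides with the functor $\ho(NJ)$ induced on localizations, i.e.\ with the $K$-theory functor on the stable homotopy category of $Mod_E$ in the sense of Proposition \ref{euivstable}. Now I would string together the equivalences $\ho(Mod_E)\simeq\ho(\mult)\simeq\ho(SymMonCat)$ supplied by Proposition \ref{euivstable}, which by construction are compatible with the relevant $K$-theory functors (the $Mod_E$--$\mult$ stage because $J(\cM^E)\to J(\cM)$ is a levelwise isomorphism, and the $\mult$--$SymMonCat$ stage because $J$ restricted along $SymMonCat\hookrightarrow\mult$ agrees up to stable equivalence with Segal's $NK$). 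Thomason's Theorem \ref{thomthm}, combined with Schwede's identification of the homotopy category of the stable Q-model structure on $\Gamma\textit{-sSet}$ with connective spectra, then says that $NK$ induces an equivalence $\ho(SymMonCat)\xrightarrow{\ \simeq\ }\ho(\Gamma\textit{-sSet})$. Composing all of these, $\mathbb{R}(NJ)=\ho(NJ)\colon\ho(Mod_E)\to\ho(\Gamma\textit{-sSet})$ is an equivalence of categories, and hence $NJ$ is a Quillen equivalence.

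\textbf{The main obstacle.} The difficulty here is not homotopy-theoretic but a matter of bookkeeping: one must check that the abstract equivalences appearing in Proposition \ref{euivstable} and in Thomason's theorem really are implemented by mutually compatible incarnations of the $K$-theory functor, so that their composite is literally $\ho(NJ)$ and not merely some equivalence with the same domain and codomain. Concretely this means tracing the comparison between Elmendorf--Mandell's $J$ and Segal's $NK$ through the comonadic adjunction $\mult\rightleftarrows SymMonCat$ of \cite{EM}, \cite{M}, and confirming that the $(-)^E$ adjunction respects $K$-theory; both of these are already contained in, or are immediate consequences of, the cited results, after which the theorem follows formally.
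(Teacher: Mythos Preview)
Your proposal is correct and follows essentially the same route as the paper: both observe that $NJ$ creates weak equivalences so that $\mathbb{R}(NJ)=\ho(NJ)$, and then combine Proposition \ref{euivstable} with Thomason's theorem and a two-out-of-three/composition argument to conclude that $\ho(NJ)$ is an equivalence. The paper handles the compatibility bookkeeping you flag as the main obstacle by writing down the explicit natural isomorphism $NK \cong NJ\circ (i(-))^E$ and then applying two-out-of-three directly, but otherwise the arguments coincide.
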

    \begin{proof}
    By the transfer theorem, the functor is already right Quillen. Because $NJ$ creates weak equivalences, the composite \[Mod_E\xrightarrow{NJ} \Gamma\textit{-sSet}\rightarrow Ho(\Gamma\textit{-sSet})\] factors through the derived functor \[Ho(NJ)\colon Ho(Mod_E)\rightarrow Ho(\Gamma\textit{-sSet})\] By \cite{EM} there is a natural isomorphism 
    \begin{center}
        \begin{tikzcd}
            \mult\ar[to=A, phantom, near start, "\cong"]\ar[r, "NJ"]   &   \Gamma\textit{-sSet}    \\
            SymMonCat\ar[u, "i"]\ar[ur, "NK"', ""{name=A}]  &
        \end{tikzcd}
    \end{center}
    Likewise there is a natural isomorphism by Lemma \ref{fullfaith}
    \begin{center}
        \begin{tikzcd}
            Mod_E\ar[to=A, phantom, "\cong"]\ar[r, "NJ"]   &   \Gamma\textit{-sSet}    \\
            \mult\ar[u, "(-)^E"]\ar[ur, "NJ"', ""{name=A}]  &
        \end{tikzcd}
    \end{center}
    
    Putting these together we have a natural isomorphism \[NK\cong NJ\circ (i(-))^E \]  By \cite{Thom} or \cite{M} the functor $Ho(NK)$ is an equivalence on homotopy categories, and by Proposition \ref{euivstable} $Ho((i(-))^E)$ is as well. Because all of the functors under consideration create weak equivalences the derived functors compose up to isomorphism, and thus it follows from the two out of three property that $Ho(NJ)$ is an equivalence. 
    \end{proof}

At the level of $\mult$ we immediately have the following corollary, treating a model structure as a special type of semi-model structure.
    \begin{cor}
        The functor $NJ:\mult\rightarrow \Gamma\textit{-sSet}$ is a right Quillen equivalence of semi-model structures.
    \end{cor}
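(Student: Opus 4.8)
The plan is to present $NJ\colon\mult\to\Gamma\textit{-sSet}$ as a composite of two right Quillen equivalences, both of which are essentially already at hand. Recall from the proof of Theorem~\ref{main2} the natural levelwise isomorphism $J(\cM^E)\cong J(\cM)$; it yields a natural isomorphism $NJ\cong (NJ|_{Mod_E})\circ(-)^E$ of functors $\mult\to\Gamma\textit{-sSet}$. The second factor, $NJ|_{Mod_E}\colon Mod_E\to\Gamma\textit{-sSet}$, is a right Quillen equivalence by Theorem~\ref{main2}, so the whole problem reduces to the first factor $(-)^E\colon\mult\to Mod_E$.

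Now $(-)^E\colon\mult\to Mod_E$ is right adjoint to the fully faithful inclusion $i\colon Mod_E\hookrightarrow\mult$ (Lemma~\ref{fullfaith}), and $i\dashv(-)^E$ is exactly the adjunction to which Theorem~\ref{fresse} was applied to produce the semi-model structure on $\mult$; in particular $(-)^E$ is right Quillen and creates — hence also preserves and reflects — weak equivalences. I claim it is moreover a right Quillen equivalence. Full faithfulness of $i$ makes the unit $\cN\to(i\cN)^E$ of $i\dashv(-)^E$ an isomorphism for every $E$-module $\cN$ (alternatively this is immediate from Proposition~\ref{modstruc1}, since $(i\cN)^E\cong\cN$). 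Consequently, for cofibrant $\cN$ the derived unit — the unit composed with $(-)^E$ applied to a fibrant replacement of $i\cN$ in $\mult$, which is a weak equivalence since $(-)^E$ preserves weak equivalences — is a weak equivalence; combined with the fact that $(-)^E$ reflects weak equivalences, the standard one-sided criterion for Quillen equivalences (in the form valid for semi-model categories, \cite{F}) exhibits $i\dashv(-)^E$ as a right Quillen equivalence.

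It then follows that $NJ\colon\mult\to\Gamma\textit{-sSet}$, being the composite of the right Quillen equivalences $(-)^E$ and $NJ|_{Mod_E}$, is itself a right Quillen equivalence, since composites of right Quillen equivalences of (semi-)model categories are again right Quillen equivalences. (One checks directly that $NJ$ is right Quillen: its left adjoint $X\mapsto X\tensor_{\Gamma} E^*$ factors as $i\circ L$, where $L\colon\Gamma\textit{-sSet}\to Mod_E$ is the left adjoint of $NJ|_{Mod_E}$ — full faithfulness of $i$ identifies $L$ this way — and $i\circ L$ preserves cofibrations and trivial cofibrations, carrying the generating (trivial) cofibrations of $\Gamma\textit{-sSet}$ to a set of generating (trivial) cofibrations of the semi-model structure on $\mult$.) This proves the corollary.

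The only step that is not pure diagram-chasing is the appeal to the semi-model-category analogues of two standard facts from \cite{F}: that the one-sided criterion detects Quillen equivalences, and that Quillen equivalences compose. Neither is difficult once one is willing to work with semi-model structures. One should also note — though it is immediate from the construction of the transferred structures, since $NJ\cong(NJ|_{Mod_E})\circ(-)^E$ and each factor creates weak equivalences — that the weak equivalences of the semi-model structure on $\mult$ are precisely those created by $NJ$, so that the homotopy category $Ho(\mult)$ implicit above agrees with the one appearing in Proposition~\ref{euivstable}.
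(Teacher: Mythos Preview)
Your proof is correct and is precisely the natural elaboration of what the paper leaves implicit. The paper states this corollary without proof, saying only that it follows ``immediately'' from Theorem~\ref{main2} by treating a model structure as a semi-model structure; your argument---factoring $NJ$ as $(NJ|_{Mod_E})\circ(-)^E$, using full faithfulness of the inclusion to see that $i\dashv(-)^E$ is a Quillen equivalence of semi-model categories, and then composing---is exactly how one would spell this out.
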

    
\section{Symmetric Monoidal Groupoids}
In this final section we characterize the fibrant objects in $Mod_E$ in order to give an application of Theorem \ref{modelstructure}.

To better understand the categories $\mult(E^n, \M)$ we need to understand the multicategories $E^n$.  Fortunately these turn out to have a very natural description. Consider for example $E^2$.  We denote the objects $(00, 10, 01, 11)$.  The multi-homsets in the product are the product of multi-homsets, and so we see in particular
\begin{itemize}
    \item The object $00$ is a monoid.
    \item Every object is a module over $00$.
    \item There are no interesting 1-arrows.
    \item There is an interesting 2-arrow $(01, 10)\rightarrow 11$.
\end{itemize}

The data of a based multifunctor $E^2\rightarrow \M$ is given then by three modules $A_{01},A_{10}, A_{11}$ and a 2-arrow $(A_{01}, A_{10})\rightarrow A_{11}$. If we consider $E^3$ we have a very similar structure: There is a monoid object $000$ together with 8 modules $ijk$ where each of $i,j,$ and $k$ takes the value 0 or 1.  There are still no 1-arrows, and there is a distinguished 3-arrow $(100, 010, 001)\rightarrow (111)$.  However there are now interesting 2-arrows, for example $(011, 100)\rightarrow 111$ and $(010, 100)\rightarrow 110$.  In fact we see that there will be an m-arrow \[(i_1j_1k_1,\dots, i_mj_mk_m)\rightarrow ijk \] precisely when $\Sigma i_l = i, \Sigma j_l = j,$ and $\Sigma k_l = k$.  The situation is similar for general $n$.  In particular letting $1_i\in E^n$ denote the object which is 0 in every coordinate except the $i^{th}$,  $E^n$ contains a distinguished $n$-arrow \[(1_1,\dots, 1_n)\rightarrow 11\dots1\]
A multifunctor $E^n\rightarrow \M$ picks out exactly this data in $\M$. An enlightening example is given by the following.
\begin{ex}
    Let $\M$ be a symmetric monoidal category.  The following diagram represents a particularly special multifunctor $E^3\rightarrow \M$, not counting the basepoint of $\M$.
    
    \begin{center}
        \begin{tikzcd}
        & A\tensor B\tensor C   & \\
        A\tensor B  &   A\tensor C & B\tensor C \\
        A   &   B   &   C
        \end{tikzcd}
    \end{center}
\end{ex}
The distinguished 3-arrow is given by an isomorphism \[A\tensor B\tensor C\xrightarrow{\cong} A\tensor B\tensor C\]The 2-arrows $(011, 100)\xrightarrow{\cong} 111$ and $(010, 100)\rightarrow 110$ correspond to the isomorphisms  \[(B\tensor C)\tensor A\xrightarrow{\cong} A\tensor B\tensor C \hspace{1cm} B\tensor A\xrightarrow{\cong} A\tensor B \]
In fact, for any symmetric monoidal category $\M$, the $\Gamma$-space $NJ(\M)$ satisfies the Segal conditions since the map \[\nmult(E^n, \M)\xrightarrow{p_n} \nmult(E, \M)\times\cdots\times \nmult(E, \M)\] is a weak equivalence by Quillen's Theorem A: for each object of the right hand side, represented as a collection of modules $(A_1,\dots, A_n)$, the slice $(A_1,\dots, A_n)/p_n$ has an initial object given as above.

\vspace{0.25cm}

Every symmetric monoidal groupoid whose set of objects forms an abelian group is fibrant in our model structure. Conversely we have 
\begin{prop}
Every fibrant object in $Mod_E$ is stably equivalent to a symmetric monoidal groupoid.
\end{prop}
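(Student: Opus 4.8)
The plan is to use fibrancy to reduce the statement to a groupoid refinement of the Segal--Thomason rectification of special $\Gamma$-categories. First I would unpack what fibrancy of an $E$-module $\cM$ means: by construction $\cM$ is fibrant exactly when $NJ(\cM)$ is stably $Q$-fibrant, hence by Schwede's theorem $NJ(\cM)$ is a levelwise fibrant, very special $\Gamma$-space. Now $NJ(\cM)_n$ is the ordinary nerve of the category $\mult(E^n,\cM)$, and the nerve of a category is a Kan complex precisely when that category is a groupoid; so levelwise fibrancy says exactly that $J(\cM)\colon\Gamma^{op}\to\cat$ takes values in groupoids. Thus a fibrant $E$-module provides a very special $\Gamma$-category $J(\cM)$ which is moreover levelwise a groupoid, equipped with abelian $\pi_0$ and Segal equivalences $\mult(E^n,\cM)\xrightarrow{\homotopic}\mult(E,\cM)^{\times n}$.

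Next I would feed $J(\cM)$ into an inverse $K$-theory (rectification) construction. By the work behind Theorem \ref{thomthm} --- Thomason's construction in \cite{Thom}, or Mandell's inverse $K$-theory functor in \cite{M} --- a special $\Gamma$-category is, functorially and up to a natural stable equivalence, the $K$-theory $\Gamma$-category of a symmetric monoidal (permutative) category; applying this to $J(\cM)$ produces a permutative category $\cG$ together with a stable equivalence comparing $NK(\cG)$ with $NJ(\cM)$. The point I would then make is that $\cG$ is a \emph{groupoid}. This is because the rectification is assembled out of the categories $J(\cM)_n=\mult(E^n,\cM)$ by limits, colimits, and comma-category constructions, and each of these, formed in $\cat$, carries groupoids to groupoids: for a comma category $d/F$ with $F\colon\cC\to\cD$, a morphism lying over an isomorphism of $\cC$ is again invertible, so $d/F$ is a groupoid whenever $\cC$ is, while both limits and colimits of diagrams of groupoids, computed in $\cat$, are again groupoids since invertibility of all generating morphisms is preserved. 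As every $\mult(E^n,\cM)$ is a groupoid, $\cG$ is therefore a symmetric monoidal groupoid; and if one wants a model whose set of objects forms an abelian group, as in the observation preceding this proposition, one replaces $\cG$ by an equivalent symmetric monoidal groupoid with that property, which does not change the stable type.

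Finally I would transport the result back to $Mod_E$. By the natural isomorphism $NK\cong NJ\circ(i(-))^E$ established in the proof of Theorem \ref{main2}, the stable equivalence above becomes $NJ\big((i\cG)^E\big)\cong NK(\cG)\homotopic NJ(\cM)$; since $NJ$ detects stable equivalences and is a Quillen equivalence (Theorem \ref{main2}), $\cM$ and $(i\cG)^E$ become isomorphic in the homotopy category, i.e.\ $\cM$ is stably equivalent to $(i\cG)^E$, which is precisely the symmetric monoidal groupoid $\cG$ regarded as an $E$-module. I expect the real work to lie in the groupoid claim of the second paragraph: pinning down which rectification functor one invokes and checking that it preserves the property of being levelwise a groupoid. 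An alternative, more self-contained route that avoids analyzing a black-box rectification is to build $\cG$ directly in Segal's style --- taking for objects the finite unordered words in the objects of $\mult(E,\cM)$ and defining the morphisms and the permutative structure from the higher categories $\mult(E^n,\cM)$ together with the Segal equivalences --- after which it is manifest from the construction that $\cG$ is a groupoid.
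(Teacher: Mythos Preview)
Your opening paragraph is right and is exactly where the paper begins: fibrancy forces each $\mult(E^n,\cM)$ to be a groupoid and $J(\cM)$ to be very special. From there, however, the paper takes a more direct route than feeding $J(\cM)$ into an external rectification machine. Since $\cM$ is an $E$-module, its underlying category is identified with $\mult(E,\cM)$ and is therefore already a groupoid; the Segal equivalences $\mult(E^n,\cM)\xrightarrow{\simeq}\mult(E,\cM)^{\times n}$ are used to choose (pseudo-)inverses, yielding functors $\otimes^n\colon\cM^{\times n}\to\cM$ that equip this underlying groupoid with a symmetric monoidal structure $\cM^{\otimes}$. The comparison is then an explicit, identity-on-objects multifunctor $\cM^{\otimes}\to\cM$, shown to be a levelwise (hence stable) equivalence by matching, over each tuple $(a_1,\dots,a_n)$, the homotopy fiber of the Segal map for $\cM$ with the contractible slice $N(a_1\otimes\cdots\otimes a_n/\cM)$. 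This produces a single stable equivalence to a symmetric monoidal groupoid built on the same objects, with no black box to open.

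Your route through Thomason's or Mandell's inverse $K$-theory has a gap that your parenthetical justification does not close. Those constructions are Grothendieck/wreath-type constructions over an indexing category of finite sets or tuples thereof, and that base is \emph{not} a groupoid; the morphisms coming from the base survive in the output, so the resulting permutative category is not a groupoid merely because each $J(\cM)_n$ is. A Grothendieck construction is a (op)lax colimit, not an ordinary colimit in $\cat$, so your appeal to ``limits, colimits, and comma constructions preserve groupoids'' does not cover what these functors actually do. Your alternative Segal-style suggestion at the end is much closer in spirit to what the paper does, and carrying it out carefully would essentially reproduce the construction of $\cM^{\otimes}$ sketched above; that is the argument to pursue.
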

\begin{proof}
Let $\cM$ be fibrant. The Segal conditions, combined with levelwise fibrancy of $NJ(\cM)$ imply that there are maps \[\nmult(E, \M)^{\times n}\xrightarrow{\simeq} \nmult(E^n, \M)\rightarrow\nmult(E,\M) \]
and hence also functors on the level of the underlying category of $\M$
\[\tensor^n:\M\times\cdots\times \M\rightarrow \M \]
which are compatible and endow the underlying category of $\M$ with a symmetric monoidal structure.  We refer to the resulting symmetric monoidal category as $\mtens$, which we consider as a multicategory based at the unit with \[\mtens(a_1,\dots,a_n: b) = \M(a_1\tensor\cdots\tensor a_n, b)\]  Now for any  $a_1,\dots,a_n\in \M$ the nerve of the slice $N(a_1\tensor\cdots\tensor a_n/\M)$ is contractible because it has an initial object.
The Segal conditions give that each of the fibers

\begin{center}
\begin{tikzcd}
\nmult(E^n, \M)_{(a_1,\dots,a_n)}\ar[r]\ar[d, "\simeq"']\ar[dr, phantom, "\lrcorner", very near start, bend right = 10em]   &   \nmult(E^n, \M)\ar[d, "\simeq"]\\
\Delta[0]\ar[r]   &   \nmult(E, M)\times\cdots\times \nmult(E,\M)
\end{tikzcd}
\end{center}
is contractible as well, and so there are equivalences for each $a_1,\dots,a_n$ \[\nmult(E^n, \M)_{(a_1,\dots,a_n)} \simeq N(a_1\tensor\cdots\tensor a_n/\M) \]
However this must be true as well for $\mtens$ since it satisfies the Segal condition, and since $N(a_1\tensor\cdots\tensor a_n/M)\cong N(a_1\tensor\cdots\tensor a_n/\mtens)$ we have equivalences \[\nmult(E^n, \M)_{(a_1,\dots,a_n)} \simeq \nmult(E^n, \mtens)_{(a_1,\dots,a_n)}   \]
which assemble into equivalences \[\nmult(E^n, \M) \simeq \nmult(E^n, \mtens)\]

Now there is an inclusion of multicategories $\mtens\rightarrow \M$ which is the identity on objects and acts on $n$-arrows $\M(a_1\tensor\cdots\tensor a_n, b)\rightarrow M(a_1,\dots, a_n:b)$ by pulling back along the canonical $n$-arrow $a_1,\dots, a_n\rightarrow a_1\tensor\cdots\tensor a_n$.  By the above argument this multifunctor is taken by $NJ$ to a levelwise equivalence.
\end{proof}
       
In light of the Quillen equivalence of Theorem \ref{main2} every connective spectrum is stably equivalent to the $K$-theory of some cofibrant-fibrant object of $Mod_E$, this proves immediately a refinement of Thomason's theorem.

\begin{thm}\label{symgroup}
   Every connective spectrum arises up to stable equivalence as the $K$-theory of a symmetric monoidal groupoid.
\end{thm}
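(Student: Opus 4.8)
The plan is to assemble the pieces already developed in the paper. First, by Theorem \ref{main2} the functor $NJ\colon Mod_E\rightarrow \Gamma\textit{-sSet}$ is a Quillen equivalence onto the stable Q-model structure, which by Schwede's theorem models connective spectra. Hence given a connective spectrum, represented by a stably fibrant $\Gamma$-space, its derived preimage under $NJ$ is a cofibrant-fibrant object $\cM$ of $Mod_E$, and the $K$-theory spectrum associated to $NJ(\cM)$ is stably equivalent to the original spectrum. So it suffices to show that every fibrant object of $Mod_E$ has its $K$-theory stably equivalent to that of a symmetric monoidal groupoid.

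Second, I would invoke the preceding Proposition, which shows that every fibrant $\cM\in Mod_E$ admits a multifunctor $\mtens\rightarrow\cM$ that $NJ$ sends to a levelwise weak equivalence, where $\mtens$ is the symmetric monoidal category built from the $\tensor^n$ operations extracted from the Segal maps; thus $\cM$ is stably equivalent to (the multicategory underlying) a symmetric monoidal category. The remaining point is to replace this symmetric monoidal category by a symmetric monoidal \emph{groupoid} without changing $K$-theory up to stable equivalence. Here I would use the standard fact (Thomason, or the discussion in \cite{M}) that inverting all morphisms of a symmetric monoidal category, i.e.\ passing to the groupoid completion (the localization at all arrows, which is again symmetric monoidal), does not change the $K$-theory spectrum up to stable equivalence---only isomorphism classes of objects and the monoidal structure on them matter for the $\Gamma$-space built by Segal's construction, and more precisely the inclusion of the subcategory of isomorphisms already induces a stable equivalence on $K$-theory. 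Alternatively one notes that fibrancy in our model structure forces $\pi_0$ of $NJ(\cM)_1$ to be an abelian group, so the monoid of objects is already group-like, and one only needs the groupoid-completion on morphisms.

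The main obstacle is the last step: justifying carefully that passing from a symmetric monoidal category to an associated symmetric monoidal groupoid preserves the stable equivalence type of the $K$-theory spectrum. I expect this to follow either by a direct appeal to \cite{Thom} or \cite{M} (where such reductions are carried out), or by observing that the levelwise map $\nmult(E^n,\cM_{\mathrm{iso}})\to\nmult(E^n,\cM)$ induced by the inclusion of the subcategory of isomorphisms is a weak equivalence---one checks this using Quillen's Theorem A together with the fibrancy/Segal data, much as in the proof of the preceding Proposition, noting that the relevant slice categories have initial objects built from canonical isomorphisms. Granting this, the composite stable equivalences chain up and the theorem follows.
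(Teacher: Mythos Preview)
Your first two steps match the paper exactly: invoke the Quillen equivalence of Theorem \ref{main2} to represent a given connective spectrum by a cofibrant--fibrant $\cM\in Mod_E$, then apply the preceding Proposition. But you have misread that Proposition: its statement already asserts that every fibrant object of $Mod_E$ is stably equivalent to a symmetric monoidal \emph{groupoid}, not merely a symmetric monoidal category. The paper's proof therefore ends after those two steps; no third step is needed.

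The point you are missing is why $\mtens$ is automatically a groupoid. Fibrancy in the transferred model structure requires $NJ(\cM)$ to be stably Q-fibrant, in particular levelwise Kan, so $N\mult(E,\cM)$ is a Kan complex. But this is the nerve of an ordinary category, and the nerve of a category is a Kan complex if and only if the category is a groupoid. Since $\mtens$ is built on this underlying category, it is already a symmetric monoidal groupoid; no further replacement is required.

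Your proposed third step---passing to the subcategory of isomorphisms, or to a groupoid completion, and claiming this preserves $K$-theory up to stable equivalence---is not correct in the generality you state it, and neither \cite{Thom} nor \cite{M} contains such a statement. For an arbitrary symmetric monoidal category the inclusion of isomorphisms need not induce a stable equivalence on $NJ$ (think of a category with no nontrivial isomorphisms but many noninvertible arrows). So had this step actually been required, your argument would have a real gap; fortunately the levelwise Kan condition handles it for free.
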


As a final remark, note that any fibrant object satisfies also that $\pi_0\cM$ is a group. This would seem to imply that $\mtens$ is a Picard groupoid, a symmetric monoidal groupoid in which every object is invertible with respect to the monoidal product, except that it is not \textit{a priori} clear that this multiplicative structure on $\pi_0$ is related to the monoidal structure on $\mtens$.
             


\begin{thebibliography}{9}

\bibitem[BF78]{BF} A. K. Bousfield and E. M. Friedlander, Homotopy theory of $\Gamma$-spaces, Spectra and Bisimplicial Sets, Springer Lecture Notes in Math., Vol. 658, Springer, Berlin, 1978.


\bibitem[EM06]{EM2} A. D. Elmendorf and M. A. Mandell, Rings, Modules, and Algebras in Infinite Loop Space Theory, Advances in Mathematics, 2006.

\bibitem[EM09]{EM} A. D. Elmendorf and M. A. Mandell, Permutative Categories, Multicategories, and Algebraic K-theory, Algebraic and Geometric Topology, 2009.


\bibitem[F09]{F} B. Fresse, Modules over Operads and Functors, Lecture Notes in Mathematics, Springer, 2009.

\bibitem[M10]{M} M. A. Mandell, An Inverse K-Theory Functor, Documenta Mathematica, 2010.

\bibitem[S72]{S}G. Segal, Categories and Cohomology Theories, Topology, 1972.

\bibitem[S99]{Sc} S. Schwede, Stable Homotopical Algebras and $\Gamma$-spaces, Mathematical Proceedings of the Cambridge Philisophical Socieity, 1999.

\bibitem[T80]{Thom2} R. W. Thomason, Cat as a Closed Model Category, Cahiers Topologie et Geometrie Differentielle, 1980.

\bibitem[T95]{Thom} R. W. Thomason, Symmetric Monoidal Categories Model all Connective Spectra, Theory and Applications of Categories, 1995.

\end{thebibliography}
\end{document}